\title{Nontrivial effective lower bounds for the least common multiple of a $q$-arithmetic progression}
\author{\sc Bakir FARHI \\
Laboratoire de Mathématiques appliquées \\
Faculté des Sciences Exactes \\
Université de Bejaia, 06000 Bejaia, Algeria \\[1mm]
\href{mailto:bakir.farhi@gmail.com}{bakir.farhi@gmail.com} \\[1mm]
\url{http://farhi.bakir.free.fr/}
}
\date{}
\def\R{{\mathbb R}}
\def\N{{\mathbb N}}
\def\Z{{\mathbb Z}}
\def\lcm{\mathrm{lcm}}
\def\gcd{\mathrm{gcd}}
\newcommand{\qbinom}[2]{{\left[\begin{array}{c}
#1 \\ #2
\end{array}\right]}_q}    
\newcommand{\qbinomsd}[2]{{[\begin{subarray}{c}
#1 \\ #2
\end{subarray}]}_q}    
\def\EMdash{\leavevmode\hbox to 10.6mm{\vrule height .63ex depth -.59ex
    width 10mm\hfill}}
\theoremstyle{plain}
\numberwithin{equation}{section}
\newtheorem{thm}{Theorem}[section]
\newtheorem{lemma}[thm]{Lemma}
\newtheorem{coll}[thm]{Corollary}
\newtheorem{thmn}{Theorem}
\begin{document}
\maketitle
\begin{abstract}
This paper is devoted to establish nontrivial effective lower bounds for the least common multiple of consecutive terms of a sequence ${(u_n)}_{n \in \mathbb{N}}$ whose general term has the form $u_n = r {[n]}_q + u_0$, where $q , r$ are positive integers and $u_0$ is a non-negative integer such that $\mathrm{gcd}(u_0 , r) = \mathrm{gcd}(u_1 , q) = 1$. For such a sequence, we show that for all positive integer $n$, we have $\mathrm{lcm}\{u_1 , u_2 , \dots , u_n\} \geq c_1 \cdot c_2^n \cdot q^{\frac{n^2}{4}}$, where $c_1$ and $c_2$ are positive constants depending only on $q , r$ and $u_0$. This can be considered as a $q$-analog of the lower bounds already obtained by the author (in 2005) and by Hong and Feng (in 2006) for the arithmetic progressions.
\end{abstract}
\noindent\textbf{MSC 2010:} Primary 11A05, 11B25, 11B65, 05A30. \\
\textbf{Keywords:} Least common multiple, $q$-analogs, arithmetic progressions.

\section{Introduction and the main results}\label{sec1}

Throughout this paper, we let $\N^*$ denote the set $\N \setminus \{0\}$ of positive integers. For $t \in \R$, we let $\lfloor t\rfloor$ denote the integer part of $t$. We say that an integer $a$ is a multiple of a non-zero rational number $r$ if the quotient $a / r$ is an integer. The letter $q$ always denotes a positive integer; besides, it is assumed if necessary that $q \geq 2$ (this assumption is needed in §\ref{sec2.2}). Let us recall the standard notations of $q$-calculus (see e.g., \cite{kac}). For $n , k \in \N$, with $n \geq k$, we have by definition:
\begin{eqnarray*}
{[n]}_q & := & \frac{q^n - 1}{q - 1} ~~ \text{for } q \neq 1 \text{ and } {[n]}_1 := n , \\[2mm]
{[n]}_q! & := & {[n]}_q {[n - 1]}_q \cdots {[1]}_q ~~ (\text{with the convention } {[0]_q! = 1}) , 
\end{eqnarray*}
$$
\qbinom{n}{k} ~:=~ \frac{{[n]}_q!}{{[k]}_q! {[n - k]}_q!} ~=~ \frac{{[n]}_q {[n - 1]}_q \cdots {[n - k + 1]}_q}{{[k]}_q!} . ~~~~~~~~~~~~
$$

\noindent The numbers $\qbinomsd{n}{k}$ are called the $q$-binomial coefficients (or the gaussian binomial coefficients) and it is well-known that they are all positive integers (see e.g., \cite{kac}). From this last fact, we derive the important property stating that:
\begin{equation}\label{eq6}
\text{For all } a , b \in \N, \text{ the positive integer } {[a]}_q! {[b]}_q! \text{ divides the positive integer } {[a + b]}_q! .  
\end{equation}
Indeed, for $a , b \in \N$, we have $\frac{{[a + b]}_q!}{{[a]}_q! {[b]}_q!} ~=~ \qbinomsd{a + b}{a} \in \N^*$.

The study of the least common multiple of consecutive positive integers began with Chebychev's work \cite{cheb} in his attempts to prove the prime number theorem. The latter defined $\psi(n) := \log\lcm(1 , 2 , \dots , n)$ ($\forall n \geq 2$) and showed that $\frac{\psi(n)}{n}$ is bounded between two positive constants, but he failed to prove that $\psi(n) \sim_{+ \infty} n$, which is equivalent to the prime number theorem. Quite recently, Hanson \cite{han} and Nair \cite{nair} respectively obtained in simple and elegant ways that $\lcm(1 , 2 , \dots , n) \leq 3^n$ ($\forall n \in \N^*$) and $\lcm(1 , 2 , \dots , n) \geq 2^n$ ($\forall n \geq 7$). Later, the author \cite{far1,far2} obtained nontrivial effective lower bounds for the least common multiple of consecutive terms in an arithmetic progression. In particular, he proved that for any $u_0 , r , n \in \N^*$, with $\gcd(u_0 , r) = 1$, we have $\lcm(u_0 , u_0 + r , \dots , u_0 + n r) ~\geq~ u_0 (r + 1)^{n - 1}$. By developing the author's method, Hong and Feng \cite{hong1} managed to improve this lower bound to the optimal one: 
\begin{equation}\label{eq7}
\lcm(u_0 , u_0 + r , \dots , u_0 + n r) ~\geq~ u_0 (r + 1)^n ~~~~ (\forall n \in \N) ,
\end{equation}
which is already conjectured by the author \cite{far1,far2}. It is interesting to note that the method used to obtain \eqref{eq7} is based on the following fundamental theorem:
\begin{thmn}[{\cite[Theorem 2]{far2}}]\label{tf}
Let $I$ be a finite non-empty set of indices and ${(u_i)}_{i \in I}$ be a sequence of non-zero integers. Then the integer
$$
\lcm\left\{u_i ,~ i \in I\right\} \cdot \lcm\left\{\prod_{\begin{subarray}{c}
i \in I \\ i \neq j
\end{subarray}} \vert u_i - u_j\vert ,~ j \in I\right\}
$$
is a multiple of the integer $\displaystyle\prod_{i \in I} u_i$.
\end{thmn}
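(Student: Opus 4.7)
My plan is to prove the stated divisibility prime by prime, by comparing $p$-adic valuations. Write $L := \lcm\{u_i : i \in I\}$ and $M := \lcm\{D_j : j \in I\}$, where $D_j := \prod_{i \in I,\, i \neq j} |u_i - u_j|$, and fix an arbitrary prime $p$. Setting $a_i := v_p(u_i) \geq 0$, it suffices to show that
\[
v_p(L) + v_p(M) \;\geq\; \sum_{i \in I} a_i \;=\; v_p\Bigl(\prod_{i \in I} u_i\Bigr) ,
\]
since then the inequality $v_p(L \cdot M) \geq v_p(\prod_i u_i)$ will hold for every prime $p$, yielding the desired divisibility. One may also assume from the outset that the $u_i$ are pairwise distinct, because otherwise some $D_j$ vanishes, forcing $M = 0$ and making the statement trivial.

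The main idea is that a single well-chosen index $i^\star$ already controls both factors on the left. Let $i^\star \in I$ realize $a_{i^\star} = \max_{i \in I} a_i$. On one hand, the very definition of $L$ gives $v_p(L) = \max_i a_i = a_{i^\star}$. On the other hand, for every $i \neq i^\star$ the ultrametric inequality yields
\[
v_p(u_i - u_{i^\star}) \;\geq\; \min(a_i,\, a_{i^\star}) \;=\; a_i,
\]
the last equality being precisely the defining property of $i^\star$. Summing over $i \neq i^\star$ gives $v_p(D_{i^\star}) \geq \sum_{i \neq i^\star} a_i$, and since $M$ is a positive multiple of $D_{i^\star}$, also $v_p(M) \geq \sum_{i \neq i^\star} a_i$. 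Adding the two bounds produces $v_p(L) + v_p(M) \geq a_{i^\star} + \sum_{i \neq i^\star} a_i = \sum_{i \in I} a_i$, as wanted.

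I do not foresee any substantial obstacle: the whole argument reduces to identifying the right index $i^\star$ (the argmax of $v_p$) and invoking the elementary inequality $v_p(x - y) \geq \min(v_p(x), v_p(y))$. The only conceptually delicate point is recognising that the high-valuation term is absorbed by $L$, which frees the differences $u_i - u_{i^\star}$ to carry the valuations of the \emph{remaining} $u_i$ in full, so that $D_{i^\star}$ alone already accounts for $\sum_{i \neq i^\star} a_i$. No Lagrange interpolation identity, symmetric-function machinery, or case-split on $p$ is needed, which makes the theorem more elementary than the far-reaching lower bounds such as \eqref{eq7} that it implies.
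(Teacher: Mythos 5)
Your proof is correct. Note that the paper does not prove this theorem itself: it imports it from \cite{far2}, where the argument rests on the partial-fraction (Lagrange-interpolation) identity $\frac{1}{\prod_{i \in I} u_i} = \pm \sum_{j \in I} \frac{1}{u_j \prod_{i \neq j}(u_j - u_i)}$; multiplying through by $L \cdot M$ makes every summand an integer (because $u_j \mid L$ and $\prod_{i \neq j}|u_j - u_i| \mid M$), so the left-hand side $\pm L M / \prod_i u_i$ is an integer. Your route is genuinely different and entirely local: for each prime $p$ you pick the index $i^\star$ maximizing $v_p(u_i)$, let $L$ absorb $a_{i^\star}$, and use the ultrametric inequality to make the single product $D_{i^\star}$ absorb $\sum_{i \neq i^\star} a_i$. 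Every step checks out, including the reduction to pairwise distinct $u_i$ (otherwise $M = 0$ and everything divides $0$) and the implicit freedom to choose a different $i^\star$ for each prime. What each approach buys: the identity gives an exact closed formula for $L M / \prod_i u_i$ as a signed sum of integers, which is the form in which the result was originally found and generalized; your valuation argument is more elementary (no rational-function identity, no distinctness of the $u_i$ as polynomial roots) and in fact proves the slightly sharper statement that $\prod_{i \in I} u_i$ divides $\lcm\{\, u_j \prod_{i \neq j}|u_i - u_j| \, , \ j \in I \,\}$, of which the stated theorem is an immediate consequence.
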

Furthermore, several authors obtained improvements of \eqref{eq7} for $n$ sufficiently large in terms of $u_0$ and $r$ (see e.g., \cite{hong2,kane}). Concerning the asymptotic estimates and the effective upper bounds for the least common multiple of an arithmetic progression, we can cite the work of Bateman et al. \cite{bat} and the very recent work of Bousla \cite{bous}.

In this paper, we apply and adapt the author's method \cite{far1,far2} (slightly developed by Hong and Feng \cite{hong1}) to establish nontrivial effective lower bounds for the least common multiple of consecutive terms in a sequence that we called {\it a $q$-arithmetic progression}; that is a sequence ${(u_n)}_n$ with general term has the form $u_n = r {[n]}_q + u_0$ ($\forall n \in \N$), where $r \in \N^*$, $u_0 \in \N$ and $r , u_0 , q$ satisfy some technical conditions. Our main results are the following:

\begin{thm}[The crucial result]\label{t1}
Let $q$ and $r$ be two positive integers and $u_0$ be a non-negative integer. Let ${(u_n)}_{n \in \N}$ be the sequence of natural numbers whose general term $u_n$ is given by: $u_n = r {[n]}_q + u_0$. Suppose that $\gcd(u_0 , r) = \gcd(u_1 , q) = 1$. Then, for any positive integers $n$ and $k$ such that $n \geq k$, the positive integer $\lcm\{u_k , u_{k + 1} , \dots , u_n\}$ is a multiple of the rational number $\frac{u_k u_{k + 1} \cdots u_n}{{[n - k]}_q!}$.
\end{thm}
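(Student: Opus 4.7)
The natural approach is to apply the fundamental theorem of Farhi cited in the excerpt (Theorem from \cite{far2}) with the index set $I = \{k,k+1,\dots,n\}$. This reduces the problem to controlling, for each $j \in I$, the quantity $P_j := \prod_{i \in I, i \neq j} |u_i - u_j|$, and then comparing $\lcm_{j \in I}(P_j)$ to $[n-k]_q!$.

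First I would compute the differences. Since $u_i - u_j = r\big([i]_q - [j]_q\big)$ and a direct calculation gives $[i]_q - [j]_q = q^{\min(i,j)} [|i-j|]_q$, one obtains $|u_i - u_j| = r\, q^{\min(i,j)} [|i-j|]_q$. Splitting the product $P_j$ at $i = j$ and collecting the factors yields
\[
P_j \;=\; r^{n-k}\, q^{E_j}\, [j-k]_q!\, [n-j]_q!
\]
for some non-negative integer exponent $E_j$ (whose explicit value will not matter). By the fundamental theorem, $\prod_{i=k}^n u_i$ therefore divides $\lcm\{u_k,\dots,u_n\} \cdot \lcm_j\{r^{n-k} q^{E_j} [j-k]_q! [n-j]_q!\}$.

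Next I would exploit the coprimality hypotheses to strip away the $r^{n-k}$ and $q^{E_j}$ factors: from $u_i \equiv u_0 \pmod r$ and $\gcd(u_0,r) = 1$ one gets $\gcd(u_i,r) = 1$, while for $i \geq 1$ the congruence $[i]_q \equiv 1 \pmod q$ gives $u_i \equiv u_1 \pmod q$, so $\gcd(u_1,q) = 1$ forces $\gcd(u_i,q) = 1$. Hence every prime $p$ dividing $\prod_{i=k}^n u_i$ is coprime to $rq$, and for such a prime $v_p(P_j) = v_p([j-k]_q! [n-j]_q!)$. Invoking the divisibility property \eqref{eq6}, i.e.\ $[j-k]_q! [n-j]_q! \mid [n-k]_q!$, I get $v_p(P_j) \leq v_p([n-k]_q!)$ for every such $p$ and every $j$, so $\prod_{i=k}^n u_i$ divides $\lcm\{u_k,\dots,u_n\} \cdot [n-k]_q!$, which is exactly the claim.

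The plan is essentially mechanical; the only subtlety (and the single place where the hypothesis $\gcd(u_1,q) = 1$ enters) is the coprimality step $\gcd(u_i,q) = 1$, which is what lets us discard the potentially large power of $q$ appearing in $P_j$. Everything else is bookkeeping: carefully writing the differences $u_i - u_j$ in closed form, applying the fundamental theorem, and invoking \eqref{eq6} prime by prime.
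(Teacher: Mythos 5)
Your proposal is correct and follows essentially the same route as the paper: the fundamental theorem applied to $I=\{k,\dots,n\}$, the closed form $|u_i-u_j|=r\,q^{\min(i,j)}{[|i-j|]}_q$, the coprimality of each $u_i$ ($i\geq 1$) with $r$ and $q$, and Property \eqref{eq6}. The only (harmless) deviation is that by arguing prime-by-prime you never need the explicit exponent $E_j$, so you bypass the paper's Lemma \ref{l3}, which bounds $\sum_{i\neq j}\min(i,j)$ only in order to exhibit a single integer multiple of all the $P_j$ before stripping off the factors $r^{n-k}q^{E_j}$ via Gauss's lemma.
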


\begin{thm}\label{t2}
In the situation of Theorem \ref{t1}, set
$$
A ~:=~ \max\left(0 ~,~ \frac{u_0 (q - 1) + 1 - r}{2 r}\right) .
$$
Then, for any positive integer $n$, we have
$$
\lcm\{u_1 , u_2 , \dots , u_n\} ~\geq~ u_1 \left(\frac{r + 1}{\sqrt{r} (A + 1)}\right)^{n - 1} q^{\frac{(n - 1) (n - 4)}{4}} .
$$
\end{thm}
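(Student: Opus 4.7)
My plan is to invoke Theorem~\ref{t1} with a carefully chosen index $k$ and then estimate the resulting product. Since the power of $q$ in the target bound is $\Theta(n^{2}/4)$, I will take $k\approx n/2$; concretely, $k=\lceil n/2\rceil$, so that $n-k\le k$. Theorem~\ref{t1} then gives
$$\lcm\{u_{1},\dots,u_{n}\}\;\geq\;\frac{u_{k}u_{k+1}\cdots u_{n}}{[n-k]_{q}!}.$$

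Next, I would use the identity $[k+i]_{q}=[k]_{q}+q^{k}[i]_{q}$, which yields $u_{k+i}=rq^{k}[i]_{q}+u_{k}$, to rewrite the right-hand side in product form as
$$u_{k}\prod_{i=1}^{n-k}\left(rq^{k}+\frac{u_{k}}{[i]_{q}}\right).$$
For every $i\le n-k\le k$ one has $[k]_{q}\ge[i]_{q}$, and the bound $u_{k}\ge r[k]_{q}$ then yields $u_{k}/[i]_{q}\ge r$. Each factor is therefore at least $rq^{k}+r=r(q^{k}+1)$, so using $u_{k}\ge u_{1}$ (immediate from monotonicity of the sequence) together with the trivial $(q^{k}+1)^{n-k}\ge q^{k(n-k)}$, I would arrive at the clean intermediate bound
$$\lcm\{u_{1},\dots,u_{n}\}\;\geq\;u_{1}\,r^{n-k}\,q^{k(n-k)}.$$

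It will then remain to check that this dominates the stated bound $u_{1}\bigl(\tfrac{r+1}{\sqrt{r}(A+1)}\bigr)^{n-1}q^{(n-1)(n-4)/4}$. Substituting $k=\lceil n/2\rceil$ and splitting on the parity of $n$, the required inequality rearranges to an elementary comparison between powers of $r$, $q$, and $A+1$, which will follow from $q\ge 2$, $r\ge 1$, and $r+1\le 2r$. The constant $A$ enters the target only through its denominator, so $A\ge 0$ only weakens what must be shown; $A$ itself is merely a convenient encoding of the error term $u_{0}(q-1)-r$ arising from the identity $(q-1)u_{k}=rq^{k}+(u_{0}(q-1)-r)$. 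The main obstacle is therefore not conceptual but algebraic bookkeeping: carrying out the parity-wise rearrangement and checking the residual inequality without losing track of the various exponents of $r$, $q$, and $A+1$. A natural refinement, should the above slack be too generous, is to replace $u_{k}\ge u_{1}$ by the sharper $u_{k}\ge u_{1}q^{k-1}/(2(A+1))$, which would make the $(A+1)^{n-1}$ denominator appear naturally and is likely close to the route the author takes.
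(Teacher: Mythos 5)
Your proposal is correct in substance but follows a genuinely different route from the paper. The paper never fixes $k$ in advance: it studies $C_{n,k}=\frac{u_k\cdots u_n}{{[n-k]}_q!}$ as a function of $k$, locates its maximizer $\ell_n$ via the auxiliary function $f(x)=q^{x-1}(rq^{x-1}+u_0(q-1)+1-r)$, proves by induction that $C_{n+1,\ell_{n+1}}\geq (r+1)q^{\ell_n-1}C_{n,\ell_n}$, and only then bounds $\ell_i$ from below (this is where $A$ genuinely arises); the payoff is the sharper intermediate bound $u_1(r+1)^{n-1}q^{\sum(\ell_i-1)}$ from which both Theorem \ref{t2} and Theorem \ref{t3} fall out. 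You instead commit to $k=\lceil n/2\rceil$ and estimate directly, and this does close: your factors give $\lcm\{u_1,\dots,u_n\}\geq u_1 r^{n-k}q^{k(n-k)}$ with $k(n-k)=\lfloor n^2/4\rfloor\geq\frac{(n-1)(n-4)}{4}+\frac{5(n-1)}{4}$ and $n-k\geq\frac{n-1}{2}$, while $A\geq 0$ and $r+1\leq 2r$ give $\bigl(\frac{r+1}{\sqrt{r}(A+1)}\bigr)^{n-1}\leq 2^{n-1}r^{(n-1)/2}$, so the surplus $q^{5(n-1)/4}\geq 2^{n-1}$ (valid since $q^{5/4}\geq 2^{5/4}>2$) absorbs everything; your proposed refinement of $u_k\geq u_1$ is unnecessary. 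Two caveats. First, your argument genuinely needs $q\geq 2$: for $q=r=1$ your bound degenerates to $u_1$ while the claimed bound is $u_1 2^{n-1}$, so you must dispose of $q=1$ separately by citing the Hong--Feng inequality \eqref{eq7}, exactly as the paper does at the start of \S\ref{sec2.2}. Second, your write-up defers the final "algebraic bookkeeping"; it is routine (as checked above), but it is the only place where the claimed inequality is actually verified, so it cannot be omitted. The trade-off between the two approaches is clear: yours is shorter and avoids all of Lemmas \ref{l5}--\ref{l13}, but it discards the $(r+1)^{n-1}$ growth and the role of $A$, recovering the stated constant only because the exponent $\frac{(n-1)(n-4)}{4}$ leaves room to spare; the paper's optimization is what one would need to approach the sharper bounds discussed in its closing remarks.
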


\begin{thm}\label{t3}
In the situation of Theorem \ref{t1}, set
$$
B ~:=~ \max\left(r ~,~ \frac{u_0 (q - 1) + 1 - r}{2}\right) . 
$$
Then, for any positive integer $n$, we have
$$
\lcm\{u_1 , u_2 , \dots , u_n\} ~\geq~ u_1 \left(\frac{r + 1}{2 \sqrt{B}}\right)^{n - 1} q^{\frac{(n - 1) (n - 4)}{4}} .
$$
\end{thm}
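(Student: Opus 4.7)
The plan is to apply Theorem~\ref{t1} with a carefully chosen starting index $k$ and then reduce the claim to an elementary algebraic inequality. I would take $k := \lceil n/2 \rceil$, so that $k \geq n-k \geq 0$. By Theorem~\ref{t1} applied to the index set $\{k, k+1, \ldots, n\}$,
\[
\lcm\{u_1, \ldots, u_n\} \;\geq\; \lcm\{u_k, \ldots, u_n\} \;\geq\; \frac{u_k u_{k+1} \cdots u_n}{[n-k]_q!} \;=\; u_k \prod_{j=1}^{n-k}\frac{u_{k+j}}{[j]_q}.
\]
The standard $q$-identity $[k+j]_q = [k]_q + q^k[j]_q$ gives $u_{k+j} = u_k + rq^k[j]_q$, so each factor rewrites as $u_{k+j}/[j]_q = rq^k + u_k/[j]_q$.

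Next, I would bound each factor from below by $rq^k + 1$. Since $r \geq 1$ and $u_0 \geq 0$, one has $u_k = r[k]_q + u_0 \geq [k]_q$, and $[k]_q \geq [j]_q$ for every $j$ with $1 \leq j \leq n-k \leq k$. Therefore $u_k/[j]_q \geq 1$, so each factor is at least $rq^k + 1$; combining with the trivial bound $u_k \geq u_1$ yields
\[
\lcm\{u_1, \ldots, u_n\} \;\geq\; u_1(rq^k + 1)^{n-k}.
\]

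To conclude, it remains to show $(rq^k+1)^{n-k} \geq \bigl((r+1)/(2\sqrt{B})\bigr)^{n-1} q^{(n-1)(n-4)/4}$. Since $B \geq r$ by definition, it suffices to prove the stronger inequality in which $B$ is replaced by $r$. Taking logarithms and using $\log(rq^k+1) \geq \log r + k\log q$ together with $\log(r+1) \leq \log 2 + \log r$ (both valid for $r \geq 1$), the claim reduces to the non-negativity of
\[
\bigl((n-k) - \tfrac{n-1}{2}\bigr)\log r \;+\; \bigl(k(n-k) - \tfrac{(n-1)(n-4)}{4}\bigr)\log q.
\]
A direct case analysis on the parity of $n$ (with $k = \lceil n/2 \rceil$) shows that the two coefficients equal $\tfrac{1}{2}$ or $0$ and $\tfrac{5n-4}{4}$ or $\tfrac{5(n-1)}{4}$ respectively, all of which are non-negative. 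The main obstacle is this final algebraic comparison, elementary but requiring small-$n$ bookkeeping (where $(n-1)(n-4)/4$ is negative); a secondary subtlety is that the argument above uses only the bound $B \geq r$, so it does not exploit the sharper part of $B$'s definition---presumably the paper's proof takes a more delicate route that produces the full $B$ from a refined estimate on $u_k/[j]_q$ that becomes effective precisely when $u_0$ is large relative to $r$.
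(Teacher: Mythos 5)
Your proof is correct, and it takes a genuinely different route from the paper's. The paper \emph{optimizes} the starting index: it studies $C_{n,k}=\frac{u_k\cdots u_n}{[n-k]_q!}$ as a function of $k$, locates the maximizing index $\ell_n$ via the auxiliary function $f$, proves the telescoping recursion $C_{n+1,\ell_{n+1}}\geq (r+1)q^{\ell_n-1}C_{n,\ell_n}$ (Lemma \ref{l9}), deduces $C_{n,\ell_n}\geq u_1(r+1)^{n-1}q^{\sum_{i=1}^{n-1}(\ell_i-1)}$ (Corollary \ref{c1}), and only then introduces $B$ as the price of bounding the $\ell_i$ from below (Lemma \ref{l13}). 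You instead fix $k=\lceil n/2\rceil$ and estimate $C_{n,k}$ in one shot via $u_{k+j}/[j]_q=rq^k+u_k/[j]_q\geq rq^k+1$: fewer factors ($\lfloor n/2\rfloor$ instead of $n-1$), but each carrying the large power $q^k$, and the slack between $k(n-k)\approx n^2/4$ and the stated exponent $(n-1)(n-4)/4$ absorbs the loss --- your parity bookkeeping ($\tfrac12$ or $0$, and $\tfrac{5n-4}{4}$ or $\tfrac{5(n-1)}{4}$) checks out. Two consequences worth noting: since you only use $B\geq r$, you in fact prove the stronger bound with $\sqrt{B}$ replaced by $\sqrt{r}$, i.e.\ $u_1\bigl(\tfrac{r+1}{2\sqrt{r}}\bigr)^{n-1}q^{(n-1)(n-4)/4}$, which is within a factor $2^{n-1}$ of what the paper's Remark 2 describes as the best this method can give (so the full definition of $B$ is simply not needed on your route); and your argument works uniformly for $q\geq 1$, whereas the paper must treat $q=1$ separately via \eqref{eq7}. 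What you give up is the sharper intermediate estimate of Corollary \ref{c2}, $\lcm\{u_1,\dots,u_n\}\geq u_1(r+1)^{n-1}q^{\sum(\ell_i-1)}$, which at $q=1$ recovers the optimal Hong--Feng bound; a fixed $k=\lceil n/2\rceil$ cannot.
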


Note that Theorem \ref{t1} is a $q$-analog of a result due to the author (see \cite[Théorème 2.3]{far1} or \cite[Theorem 3]{far2}). Furthermore, Theorems \ref{t2} and \ref{t3} are derived from Theorem \ref{t1} by optimizing a certain specific expression, and they can be considered as $q$-analogs of the results by the author \cite{far1,far2} and those by Hong and Feng \cite{hong1}.

From Theorems \ref{t2} and \ref{t3}, we immediately derive the two following corollaries:

\begin{coll}\label{c3}
Let $q$, $a$ and $b$ be integers such that $q \geq 2$, $a \geq 1$ and $b \geq - a$ and let ${(v_n)}_{n \in \N}$ be the sequence of natural numbers whose general term $v_n$ is given by:
$$
v_n ~=~ a q^n + b ~~~~ (\forall n \in \N) .
$$
Suppose that $\gcd(a q , b) = \gcd(a + b , q - 1) = 1$ and set
$$
A' ~:=~ \max\left(0 ~,~ \frac{b}{2 a} + \frac{1}{2 a (q - 1)}\right) .
$$
Then, for any positive integer $n$, we have
$$
\lcm\{v_1 , v_2 , \dots , v_n\} ~\geq~ (a q + b) \left(\frac{a (q - 1) + 1}{\sqrt{a (q - 1)} (A' + 1)}\right)^{n - 1} q^{\frac{(n - 1) (n - 4)}{4}} .
$$
\end{coll}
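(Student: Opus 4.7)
The plan is to reduce Corollary \ref{c3} directly to Theorem \ref{t2} by recognizing the sequence ${(v_n)}$ as a $q$-arithmetic progression in the sense of the paper. Using the identity $[n]_q = (q^n - 1)/(q-1)$, I would write
$$
a q^n + b ~=~ a(q-1) \cdot \frac{q^n - 1}{q-1} + (a + b) ~=~ a(q-1) \cdot {[n]}_q + (a+b),
$$
so that with the choice $r := a(q-1)$ and $u_0 := a+b$ one has $v_n = u_n = r[n]_q + u_0$ for every $n \in \N$. Since $q \geq 2$ and $a \geq 1$, we have $r \in \N^*$, and since $b \geq -a$, we have $u_0 \in \N$.

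Next I would verify the two coprimality hypotheses of Theorem \ref{t1}. For $\gcd(u_0,r)$: the assumption $\gcd(aq,b) = 1$ gives in particular $\gcd(a,b) = 1$, hence $\gcd(a+b,a) = 1$; combined with the hypothesis $\gcd(a+b,q-1) = 1$, this yields $\gcd(a+b,\, a(q-1)) = 1$, i.e.\ $\gcd(u_0,r) = 1$. For $\gcd(u_1,q)$: one computes $u_1 = r + u_0 = a(q-1) + (a+b) = aq + b$, so $\gcd(u_1,q) = \gcd(aq+b,q) = \gcd(b,q)$, which equals $1$ because $\gcd(aq,b) = 1$.

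With the hypotheses of Theorem \ref{t2} thus in place, I would compute the constant $A$ in those parameters:
$$
A ~=~ \max\!\left(0,\, \frac{u_0(q-1) + 1 - r}{2r}\right) ~=~ \max\!\left(0,\, \frac{(a+b)(q-1) + 1 - a(q-1)}{2\, a(q-1)}\right) ~=~ \max\!\left(0,\, \frac{b}{2a} + \frac{1}{2a(q-1)}\right),
$$
which is exactly $A'$. Substituting $u_1 = aq+b$, $r = a(q-1)$ and $A = A'$ into the conclusion of Theorem \ref{t2} immediately yields the stated bound for $\lcm\{v_1,\dots,v_n\}$.

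There is no real obstacle here; the proof is a routine translation. The only point requiring a little attention is the coprimality check $\gcd(u_0,r) = 1$, which must be decomposed into its $a$-part and $(q-1)$-part and recombined, using the two separate hypotheses $\gcd(aq,b) = 1$ and $\gcd(a+b,q-1) = 1$ of the corollary.
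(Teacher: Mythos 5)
Your proposal is correct and follows exactly the paper's route: rewrite $v_n = a(q-1)[n]_q + (a+b)$, apply Theorem \ref{t2} with $r = a(q-1)$ and $u_0 = a+b$, and check that the hypotheses $\gcd(aq,b) = \gcd(a+b,q-1) = 1$ yield $\gcd(u_0,r) = \gcd(u_1,q) = 1$ and that $A$ specializes to $A'$. Your write-up is in fact more detailed than the paper's one-line proof, which leaves the coprimality verification and the computation of $A'$ to the reader.
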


\begin{coll}\label{c4}
In the situation of Corollary \ref{c3}, set
$$
B' ~:=~ \max\left(a (q - 1) ~,~ \frac{b (q - 1) + 1}{2}\right) .
$$
Then, for any positive integer $n$, we have
$$
\lcm\{v_1 , v_2 , \dots , v_n\} ~\geq~ (a q + b) \left(\frac{a (q - 1) + 1}{2 \sqrt{B'}}\right)^{n - 1} q^{\frac{(n - 1) (n - 4)}{4}} .
$$
\end{coll}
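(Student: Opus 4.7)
The plan is to reduce Corollary \ref{c4} directly to Theorem \ref{t3} by recognising $v_n = a q^n + b$ as a $q$-arithmetic progression in the sense of that theorem. Setting $r := a(q-1)$ and $u_0 := a + b$, a one-line computation gives
$$r {[n]}_q + u_0 ~=~ a(q-1) \cdot \frac{q^n - 1}{q-1} + (a + b) ~=~ a q^n + b ~=~ v_n,$$
so the sequence $(v_n)_{n \in \N}$ is exactly the $q$-arithmetic progression associated with these values of $r$ and $u_0$.

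First I would verify that $(r, u_0)$ meets the hypotheses of Theorem \ref{t1}. The integrality and sign conditions $r \in \N^*$ and $u_0 \in \N$ follow at once from $a \geq 1$, $q \geq 2$ and $b \geq -a$. For the coprimality $\gcd(u_0, r) = 1$, I would note that $\gcd(a+b, q-1) = 1$ is given and that $\gcd(a+b, a) = \gcd(b, a) = 1$ follows from $\gcd(aq, b) = 1$; together these yield $\gcd(a+b, a(q-1)) = 1$. For the second condition, since $u_1 = r + u_0 = a q + b$, we have $\gcd(u_1, q) = \gcd(aq + b, q) = \gcd(b, q) = 1$, again by $\gcd(aq, b) = 1$.

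Next I would apply Theorem \ref{t3} and translate the constants that appear in its conclusion. The leading factor becomes $u_1 = aq + b$, the base $r + 1$ becomes $a(q-1) + 1$, and the threshold $B$ becomes
$$B ~=~ \max\!\left(a(q-1) ~,~ \frac{(a+b)(q-1) + 1 - a(q-1)}{2}\right) ~=~ \max\!\left(a(q-1) ~,~ \frac{b(q-1) + 1}{2}\right),$$
which is precisely the quantity $B'$ of the statement. Substituting these three identifications into the inequality furnished by Theorem \ref{t3} reproduces the desired lower bound verbatim, with the same exponential factor $q^{(n-1)(n-4)/4}$.

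There is no real obstacle here; the work is pure bookkeeping. The only subtlety is extracting the auxiliary coprimalities $\gcd(a, b) = 1$ and $\gcd(b, q) = 1$ from the single hypothesis $\gcd(aq, b) = 1$, and checking that after the substitution $r = a(q-1)$, $u_0 = a + b$, the cumbersome expression $\frac{u_0(q-1) + 1 - r}{2}$ collapses to the clean $\frac{b(q-1) + 1}{2}$ appearing in $B'$.
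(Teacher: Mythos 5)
Your proposal is correct and follows exactly the paper's own route: rewrite $v_n = a(q-1)\,{[n]}_q + (a+b)$ and invoke Theorem \ref{t3} with $r = a(q-1)$, $u_0 = a+b$. You have simply spelled out the hypothesis checks and the computation $B = B'$ that the paper leaves implicit, and these are all carried out correctly.
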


\section{The proofs}

Throughout the following, we fix $q , r \in \N^*$ and $u_0 \in \N$ such that $\gcd(u_0 , r) = \gcd(u_1 , q) = 1$ and we let ${(u_n)}_{n \in \N}$ denote the sequence of natural numbers defined by its general term $u_n := r {[n]}_q + u_0$ ($\forall n \in \N$).

\subsection{Proof of Theorem \ref{t1}}

To prove Theorem \ref{t1}, We need the three following lemmas:

\begin{lemma}\label{l1}
For all $i , j \in \N$, we have
$$
\left\vert u_i - u_j\right\vert ~=~ r q^{\min(i , j)} {[|i - j|]}_q .
$$
\end{lemma}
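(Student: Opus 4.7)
The plan is to compute $u_i - u_j$ directly from the definition and use the factorization $q^i - q^j = q^{\min(i,j)}(q^{|i-j|}-1)$.

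First, I would assume without loss of generality that $i \geq j$, so $\min(i,j) = j$ and $|i-j| = i-j$; the symmetry of the statement in $i,j$ handles the other case. Substituting $u_n = r[n]_q + u_0$, the $u_0$ terms cancel and I get
\[
u_i - u_j \;=\; r\bigl([i]_q - [j]_q\bigr).
\]
For $q \neq 1$, using $[n]_q = (q^n-1)/(q-1)$ this becomes
\[
u_i - u_j \;=\; r\cdot\frac{q^i - q^j}{q-1} \;=\; r\cdot\frac{q^j\bigl(q^{i-j}-1\bigr)}{q-1} \;=\; r\, q^j\, [i-j]_q,
\]
which is exactly $r\, q^{\min(i,j)}\,[|i-j|]_q$. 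For $q = 1$, both sides reduce to $r|i-j|$ directly from $u_n = rn + u_0$ and the convention $[n]_1 = n$, so the identity holds in that case as well. Taking absolute values is harmless since $r$, $q^{\min(i,j)}$ and $[|i-j|]_q$ are all non-negative.

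There is essentially no obstacle here: the lemma is a one-line algebraic identity that simply records how differences behave under the $q$-analog. The only thing to be slightly careful about is handling the degenerate case $q=1$ separately (since the formula $[n]_q = (q^n-1)/(q-1)$ is not literally valid there) and confirming that the answer $r q^{\min(i,j)}[|i-j|]_q$ is symmetric in $i$ and $j$, which matches the symmetry of $|u_i - u_j|$.
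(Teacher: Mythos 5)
Your proof is correct and follows essentially the same route as the paper's: reduce to $i \geq j$ by symmetry, cancel the $u_0$ terms, and factor $q^i - q^j = q^j(q^{i-j}-1)$ to recover $r q^j [i-j]_q$. Your separate treatment of the degenerate case $q=1$ is a small extra care the paper omits, but it does not change the argument.
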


\begin{proof}
Let $i , j \in \N$. Because the two sides of the equality of the lemma are both symmetric (in $i$ and $j$), we may suppose without loss of generality that $i \geq j$. Doing so, we have
\begin{eqnarray*}
\left\vert u_i - u_j\right\vert = u_i - u_j & = & \left(r {[i]}_q + u_0\right) - \left(r {[j]}_q + u_0\right) \\
& = & r \left({[i]}_q - {[j]}_q\right) \\[1mm]
& = & r \left(\frac{q^i - 1}{q - 1} - \frac{q^j - 1}{q - 1}\right) \\[1mm]
& = & r \left(\frac{q^i - q^j}{q - 1}\right) \\[1mm]
& = & r q^j \left(\frac{q^{i - j} - 1}{q - 1}\right) \\[1mm]
& = & r q^j {[i - j]}_q \\
& = & r q^{\min(i , j)} {[|i - j|]}_q ,
\end{eqnarray*}
as required. The lemma is proved.
\end{proof}

\begin{lemma}\label{l2}
For all $n \in \N$, we have
$$
\gcd(u_n , r) ~=~ 1 .
$$
If in addition $n \geq 1$, then we have
$$
\gcd(u_n , q) ~=~ 1 .
$$
\end{lemma}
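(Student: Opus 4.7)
The plan is to handle the two claims separately, since each reduces to one of the two coprimality hypotheses $\gcd(u_0,r)=1$ and $\gcd(u_1,q)=1$.

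For the first statement, I would simply use the defining expression $u_n = r[n]_q + u_0$ and reduce modulo $r$. Since $r \mid r[n]_q$, we get $u_n \equiv u_0 \pmod{r}$, so that $\gcd(u_n,r) = \gcd(u_0,r) = 1$ by hypothesis. This is immediate and requires no case analysis.

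For the second statement (assuming $n \geq 1$), the key observation is the congruence $[n]_q \equiv 1 \pmod{q}$, which follows at once from $[n]_q = 1 + q + q^2 + \cdots + q^{n-1}$ when $n \geq 1$. Multiplying by $r$ and adding $u_0$ yields $u_n \equiv r + u_0 = u_1 \pmod{q}$, and therefore $\gcd(u_n,q) = \gcd(u_1,q) = 1$ by hypothesis. Note that the restriction $n \geq 1$ is needed precisely because $[0]_q = 0 \not\equiv 1 \pmod q$ in general, which explains why the hypothesis imposes $\gcd(u_1,q)=1$ rather than $\gcd(u_0,q)=1$.

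There is no real obstacle here: both parts follow from direct congruence computations together with the standing assumptions on $u_0$ and $u_1$. The only subtle point is making sure to invoke the formula $[n]_q = \sum_{i=0}^{n-1} q^i$ so that the reduction mod $q$ is transparent, and to note that this reduction requires $n \geq 1$ (so that the sum is non-empty and starts with the constant term $1$).
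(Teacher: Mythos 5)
Your proof is correct and follows essentially the same route as the paper: both parts reduce $u_n$ modulo $r$ (resp.\ $q$) to $u_0$ (resp.\ $u_1$) and invoke the standing coprimality hypotheses. The only cosmetic difference is that the paper obtains $u_n \equiv u_1 \pmod{q}$ from the integer combination $(r q^n + u_0 q) - (q-1) u_n = u_1$, whereas you use the expansion ${[n]}_q = 1 + q + \cdots + q^{n-1} \equiv 1 \pmod{q}$; both are valid.
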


\begin{proof}
Let $n \in \N$ and let us show that $\gcd(u_n , r) = 1$. This is equivalent to show that $d = 1$ is the only positive common divisor of $u_n$ and $r$. So, let $d$ be a positive common divisor of $u_n$ and $r$ and let us show that $d = 1$. The hypothesis $d | u_n$ and $d | r$ imply $d | (u_n - r {[n]}_q) = u_0$. Hence $d$ is a positive common divisor of $u_0$ and $r$. But since $\gcd(u_0 , r) = 1$, it follows that $d = 1$, as required. Consequently, we have $\gcd(u_n , r) = 1$.

Next, let $n \in \N^*$ and let us show that $\gcd(u_n , q) = 1$. Equivalently, we have to show that $d = 1$ is the only positive common divisor of $u_n$ and $q$. So, let $d$ be a positive common divisor of $u_n$ and $q$ and let us show that $d = 1$. The hypothesis $d | u_n$ and $d | q$ imply \linebreak $d | \{(r q^n + u_0 q) - (q - 1) u_n\} = r + u_0 = u_1$. So, $d$ is a positive common divisor of $u_1$ and $q$. But since $\gcd(u_1 , q) = 1$, we conclude that $d = 1$, as required. Consequently, we have $\gcd(u_n , q) = 1$. This completes the proof of the lemma. 
\end{proof}

\begin{lemma}\label{l3}
For any positive integers $n$ and $k$ such that $n \geq k$ and any $j \in \{k , k + 1 , \dots , n\}$, we have
$$
\sum_{\begin{subarray}{c}
k \leq i \leq n \\
i \neq j
\end{subarray}} \min(i , j) ~\leq~ \frac{(n - k) (n + k - 1)}{2} .
$$ 
\end{lemma}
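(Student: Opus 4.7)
My plan is to split the sum at the index $j$ and evaluate each piece in closed form, obtaining a quadratic function of $j$, which I will then maximize over the admissible range.

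First, I would write
$$
S(j) \;:=\; \sum_{\substack{k \leq i \leq n \\ i \neq j}} \min(i,j) \;=\; \sum_{i=k}^{j-1} i \;+\; \sum_{i=j+1}^{n} j,
$$
since $\min(i,j) = i$ when $i < j$ and $\min(i,j) = j$ when $i > j$ (the case $j=k$ makes the first sum empty, and the case $j=n$ makes the second sum empty, which is fine). Using the standard formula for an arithmetic sum, the first piece equals $(k+j-1)(j-k)/2$, and the second piece is simply $j(n-j)$.

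After a short expansion and grouping, $S(j)$ becomes a quadratic in $j$ of the form
$$
S(j) \;=\; \frac{-j^2 + (2n-1)j + (k - k^2)}{2}.
$$
The coefficient of $j^2$ is negative, so the vertex is at $j = n - \tfrac{1}{2}$; since $j$ is an integer and lies in $\{k,\dots,n\}$, the maximum is attained at $j = n$ (and also, by symmetry around the vertex, at $j = n-1$ when $n-1 \geq k$). The main task is then to verify by direct substitution that $S(n) = (n-k)(n+k-1)/2$, which matches the desired upper bound. This gives $S(j) \leq S(n) = (n-k)(n+k-1)/2$ for every $j \in \{k,\dots,n\}$, proving the lemma.

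There is no real obstacle here — the only care needed is in handling the boundary cases $j=k$ and $j=n$ correctly (empty sums) and in tracking signs during the algebraic manipulation; the key conceptual point is just recognizing that the inequality reduces to maximizing a concave quadratic on an interval whose right endpoint happens to make the bound tight.
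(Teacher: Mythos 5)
Your proof is correct and follows essentially the same route as the paper: both split the sum at $i=j$, evaluate the two pieces in closed form to get the quadratic $\frac{-j^2+(2n-1)j+k-k^2}{2}$, and then bound it by $\frac{(n-k)(n+k-1)}{2}$. The only cosmetic difference is that you maximize the concave quadratic over $j$ (finding the maximum at $j=n$), whereas the paper rewrites the expression as the bound minus the nonnegative quantity $\frac{(n-j)^2-(n-j)}{2}$; these are interchangeable finishes.
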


\begin{proof}
Let $n$ and $k$ be positive integers such that $n \geq k$ and let $j \in \{k , k + 1 , \dots , n\}$. We have
\begin{eqnarray*}
\sum_{\begin{subarray}{c}
k \leq i \leq n \\
i \neq j
\end{subarray}} \min( i , j) & = & \sum_{k \leq i < j} \min(i , j) + \sum_{j < i \leq n} \min(i , j) \\
& = & \sum_{k \leq i < j} i + \sum_{j < i \leq n} j \\[1mm]
& = & \frac{(j - k) (j + k - 1)}{2} + (n - j) j \\[1mm]
& = & \frac{2 n j - j^2 - k^2 - j + k}{2} \\[1mm]
& = & \frac{(n - k) (n + k - 1) + (n - j) - (n - j)^2}{2} \\[1mm]
& \leq & \frac{(n - k) (n + k - 1)}{2}
\end{eqnarray*}
(since $n - j \leq (n - j)^2$, because $n - j \in \N$). The lemma is proved.
\end{proof}

Now, we are ready to prove the crucial theorem \ref{t1}:

\begin{proof}[Proof of Theorem \ref{t1}]
Let $n$ and $k$ be positive integers such that $n \geq k$. By applying the fundamental theorem \ref{tf} to the set of indices $I = \{k , k + 1 , \dots , n\}$ and to the sequence ${(u_i)}_{i \in I} = \{u_k , u_{k + 1} , \dots , u_n\}$, we find that the positive integer
$$
\lcm\left\{u_k , u_{k + 1} , \dots , u_n\right\} \cdot \lcm\left\{\prod_{\begin{subarray}{c}
k \leq i \leq n \\
i \neq j
\end{subarray}} \left\vert u_i - u_j\right\vert;~ j = k , \dots , n\right\}
$$
is a multiple of the positive integer $u_k u_{k + 1} \cdots u_n$. Now, let us find a simple multiple for the positive integer $\lcm\left\{\prod_{k \leq i \leq n , i \neq j} |u_i - u_j| ; j = k , \dots , n\right\}$. According to Lemma \ref{l1}, we have for any $j \in \{k , k + 1 , \dots , n\}$:
$$
\prod_{\begin{subarray}{c}
k \leq i \leq n \\
i \neq j
\end{subarray}} \left\vert u_i - u_j\right\vert ~=~ \prod_{\begin{subarray}{c}
k \leq i \leq n \\
i \neq j
\end{subarray}} \left(r q^{\min(i , j)} {[|i - j|]}_q\right) ~~~~~~~~~~~~~~~~~~~~~~~~~~~~~~~~~~~~~~~~~~~~~~~~~~~~~~~~~~~~
$$
\begin{eqnarray*}
& = & r^{n - k} q^{\sum_{\begin{subarray}{c}
k \leq i \leq n \\
i \neq j
\end{subarray}} \min(i , j)} \prod_{\begin{subarray}{c}
k \leq i \leq n \\
i \neq j
\end{subarray}} {\left[|i - j|\right]}_q \\
& = & r^{n - k} q^{\sum_{\begin{subarray}{c}
k \leq i \leq n \\
i \neq j
\end{subarray}} \min(i , j)} {[1]}_q {[2]}_q \cdots {[j - k]}_q \times {[1]}_q {[2]}_q \cdots {[n - j]}_q \\[1mm]
& = & r^{n - k} q^{\sum_{\begin{subarray}{c}
k \leq i \leq n \\
i \neq j
\end{subarray}} \min(i , j)} {[j - k]}_q! {[n - j]}_q! ,
\end{eqnarray*}
which divides (according to Lemma \ref{l3} and Property \eqref{eq6}) the positive integer
$$
r^{n - k} q^{\frac{(n - k)(n + k - 1)}{2}} {[n - k]}_q! .
$$
Consequently, the positive integer $\lcm\{\prod_{k \leq i \leq n , i \neq j} |u_i - u_j| ; j = k , \dots , n\}$ divides the positive integer $r^{n - k} q^{\frac{(n - k) (n + k - 1)}{2}} {[n - k]}_q!$. It follows (according to what obtained at the beginning of this proof) that the positive integer $u_k u_{k + 1} \cdots u_n$ divides the positive integer \linebreak $r^{n - k} q^{\frac{(n - k) (n + k - 1)}{2}} {[n - k]}_q! \, \lcm\{u_k , u_{k + 1} , \dots , u_n\}$. Next, since (according to Lemma \ref{l2}) the \linebreak integers $u_i$ ($i \geq 1$) are all coprime with $r$ and $q$ then the product $u_k u_{k + 1} \cdots u_n$ is coprime with $r^{n - k} q^{\frac{(n - k) (n + k - 1)}{2}}$, which concludes (according to the Gauss lemma) that $u_k u_{k + 1} \cdots u_n$ \linebreak divides ${[n - k]}_q! \, \lcm\{u_k , u_{k + 1} , \dots , u_n\}$. Equivalently, the positive integer $\lcm\{u_k , u_{k + 1} , \dots , u_n\}$ is a multiple of the rational number $\frac{u_k u_{k + 1} \cdots u_n}{{[n - k]}_q!}$. This achieves the proof. 
\end{proof}

\subsection{Proofs of Theorems \ref{t2} and \ref{t3} and their corollaries}\label{sec2.2}

To deduce Theorems \ref{t2} and \ref{t3} from Theorem \ref{t1}, we need some additional preparations. Since, for $q = 1$, Theorems \ref{t2} and \ref{t3} are immediate consequences of \eqref{eq7}, we may suppose for the sequel that $\boldsymbol{q \geq 2}$. Next, we naturally extend the definition of $u_n$ to negative indices $n$ and we define for all $n , k \in \Z$ such that $n \geq k$:
$$
C_{n , k} ~:=~ \frac{u_k u_{k + 1} \cdots u_n}{{[n - k]}_q!} .
$$
Furthermore, for a given positive integer $n$, the problem of determining the positive integer $k \leq n$ which maximizes $C_{n , k}$ leads us to introduce the function $f : \R \rightarrow \R$, defined by:
$$
f(x) ~:=~ q^{x - 1} \left(r q^{x - 1} + u_0 (q - 1) + 1 - r\right) ~~~~~~~~~~ (\forall x \in \R) .
$$
It is immediate that $f$ increases, tends to $0$ as $x$ tends to $(- \infty)$ and satisfies, for all $n \in \N^*$, the property:
$$
\forall k \in \Z :~~ k > n ~\Rightarrow~ f(k) > q^n .
$$
For a given positive integer $n$, these properties ensure the existence of a largest $k_n \in \Z$ satisfying $f(k_n) \leq q^n$, and show, in addition, that $k_n \leq n$. From the increase of $f$ and the definition of $k_n$ ($n \in \N^*$), we derive that:
\begin{equation}\label{eq1}
\forall k \in \Z :~~ k \leq k_n ~\Longleftrightarrow~ f(k) \leq q^n .
\end{equation}
Now, since for any $n \in \N^*$ and any $k \in \Z$, we have
\begin{eqnarray*}
f(k) \leq q^n & \Longleftrightarrow & q^{k - 1} \left(r q^{k - 1} + u_0 (q - 1) + 1 - r\right) \leq q^n \\
& \Longleftrightarrow & r q^{k - 1} + u_0 (q - 1) + 1 - r \leq q^{n - k + 1} \\
& \Longleftrightarrow & \frac{q^{n - k + 1} - 1}{q - 1} \geq r \frac{q^{k - 1} - 1}{q - 1} + u_0 \\
& \Longleftrightarrow & {[n - k + 1]}_q \geq u_{k - 1} ,
\end{eqnarray*}
then Property \eqref{eq1} is equivalent to:
\begin{equation}\label{eq2}
\forall k \in \Z :~~ k \leq k_n ~\Longleftrightarrow~ {[n - k + 1]}_q \geq u_{k - 1} .
\end{equation}
For a given positive integer $n$, we set
$$
\ell_n ~:=~ \max(1 , k_n) .
$$
Since $k_n \leq n$, we have that: $\ell_n \in \{1 , 2 , \dots , n\}$.

Next, it is immediate that $f$ satisfies the following inequality:
\begin{equation}\label{eq3}
f(x - 1) ~\leq~ \frac{1}{q} f(x) ~~~~~~~~~~ (\forall x \in \R) .
\end{equation}

For a fixed $n \in \N^*$, the following lemmas aim to maximize the quantity $C_{n , k}$ ($1 \leq k \leq n$) appearing in Theorem \ref{t1}. Precisely, we shall determine two simple upper bounds for $\max_{1 \leq k \leq n} C_{n , k}$ from which we derive our theorems \ref{t2} and \ref{t3}.

\begin{lemma}\label{l5}
Let $n$ be a fixed positive integer. The sequence ${(C_{n , k})}_{k \in \Z , k \leq n}$ is non-decreasing until $k = k_n$ then it decreases. So, it reaches its maximal value at $k = k_n$.
\end{lemma}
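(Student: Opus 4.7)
The plan is to study the ratio of consecutive terms $C_{n,k}/C_{n,k-1}$ and show it is $\geq 1$ precisely when $k \leq k_n$, using the equivalence \eqref{eq2} provided just above the lemma.

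Concretely, I would first compute, for any $k \in \Z$ with $k \leq n$,
$$
\frac{C_{n , k}}{C_{n , k - 1}} ~=~ \frac{u_k u_{k + 1} \cdots u_n}{{[n - k]}_q!} \cdot \frac{{[n - k + 1]}_q!}{u_{k - 1} u_k \cdots u_n} ~=~ \frac{{[n - k + 1]}_q}{u_{k - 1}} ,
$$
after the telescoping of the $u_i$'s in the numerator against those in the denominator and the simplification ${[n - k + 1]}_q! / {[n - k]}_q! = {[n - k + 1]}_q$. (Observe that $u_{k - 1} > 0$ whenever $C_{n , k - 1}$ is one of the terms we care about; in any case, the identity is an algebraic identity of rational numbers.)

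Next I would invoke the equivalence \eqref{eq2}, which states that for all $k \in \Z$,
$$
k \leq k_n ~\Longleftrightarrow~ {[n - k + 1]}_q \geq u_{k - 1} .
$$
Combined with the ratio computation, this immediately gives: $C_{n , k} \geq C_{n , k - 1}$ if and only if $k \leq k_n$. Equivalently, the sequence ${(C_{n , k})}_{k \leq n}$ is non-decreasing on the range $k \leq k_n$ and strictly decreasing on the range $k \geq k_n + 1$, so its maximum over $k \leq n$ is attained at $k = k_n$.

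I don't expect any serious obstacle: the lemma is essentially a bookkeeping argument, the only subtlety being the sign of $u_{k - 1}$ for very negative $k$, which is harmless because the statement is about the monotonicity pattern of the sequence and the ratio identity is valid as an equality of rational numbers (and for $k$ close to $k_n$ the quantities $u_{k - 1}$ are genuinely positive, so \eqref{eq2} applies without worry).
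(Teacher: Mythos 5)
Your proposal is correct and follows exactly the paper's own argument: compute the ratio $C_{n,k}/C_{n,k-1} = {[n-k+1]}_q / u_{k-1}$ by telescoping and then apply equivalence \eqref{eq2} to read off the monotonicity pattern. Your side remark about the sign of $u_{k-1}$ for very negative $k$ is a point the paper itself glosses over, and your handling of it is fine.
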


\begin{proof}
For any $k \in \Z$, with $k \leq n$, we have
\begin{eqnarray*}
C_{n , k} \geq C_{n , k - 1} & \Longleftrightarrow & \frac{C_{n , k}}{C_{n , k - 1}} \geq 1 \\[2mm]
& \Longleftrightarrow & \frac{u_k u_{k + 1} \cdots u_n}{{[n - k]}_q!} \Big{/} \frac{u_{k - 1} u_k \cdots u_n}{{[n - k + 1]}_q!} \geq 1 \\[2mm]
& \Longleftrightarrow & \frac{{[n - k + 1]}_q}{u_{k - 1}} \geq 1 \\[2mm]
& \Longleftrightarrow & {[n - k + 1]}_q \geq u_{k - 1} \\
& \Longleftrightarrow & k \leq k_n ~~~~~~~~~~ (\text{according to \eqref{eq2}}) ,
\end{eqnarray*}
which concludes to the result of the lemma.
\end{proof}

From the last lemma, we obviously derive the following:

\begin{lemma}\label{l6}
Let $n$ be a fixed positive integer. Then the sequence ${(C_{n , k})}_{1 \leq k \leq n}$ reaches its maximal value at $k = \ell_n$. \hfill $\square$
\end{lemma}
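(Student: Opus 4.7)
The plan is a trivial case split on the sign of $k_n$, leveraging Lemma \ref{l5} directly. Recall that $\ell_n := \max(1, k_n)$ and that by construction $k_n \leq n$, so in any case $\ell_n \in \{1, 2, \dots, n\}$.

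First I would handle the case $k_n \geq 1$. Here $\ell_n = k_n$ lies in $\{1,\dots,n\}$, and Lemma \ref{l5} asserts that the entire sequence $(C_{n,k})_{k \in \Z,\, k \leq n}$ attains its maximum at $k = k_n$. Restricting the index range to $\{1,\dots,n\}$ can only shrink the set of competitors, and since the maximizer $k_n$ still belongs to the restricted range, it remains the maximizer there. Hence $\max_{1 \leq k \leq n} C_{n,k} = C_{n,k_n} = C_{n,\ell_n}$.

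Next I would treat the case $k_n \leq 0$. Here $\ell_n = 1$. By Lemma \ref{l5}, the sequence $(C_{n,k})$ is non-increasing for $k \geq k_n$; in particular it is non-increasing on the range $\{1,2,\dots,n\} \subseteq \{k_n, k_n+1, \dots, n\}$. Consequently its maximum on $\{1,\dots,n\}$ is attained at the left endpoint $k = 1 = \ell_n$, which again gives the conclusion.

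There is essentially no obstacle: the only subtlety is recognizing that the definition $\ell_n = \max(1, k_n)$ is engineered precisely to absorb the two possible locations of the restricted maximizer (either the true interior maximizer $k_n$, when it is already $\geq 1$, or the boundary value $1$, when $k_n$ has fallen below the admissible range). Both cases are immediate corollaries of the monotonicity established in Lemma \ref{l5}, which is why the author flags the lemma with a $\square$ in the statement.
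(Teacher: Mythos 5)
Your proof is correct and is exactly the routine case analysis ($k_n \geq 1$ versus $k_n \leq 0$) that the paper leaves implicit by marking the lemma as an obvious consequence of Lemma \ref{l5}. Nothing further is needed.
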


If $n \in \N^*$ is fixed, we have from Lemma \ref{l6} above that $\max_{1 \leq k \leq n} C_{n , k} = C_{n , \ell_n}$; however, the exact value of $C_{n , \ell_n}$ (in terms of $n , q , r , u_0$) is complicated. The lemmas below provide studies of the sequences ${(k_n)}_n$, ${(\ell_n)}_n$ and ${(C_{n , \ell_n})}_n$ in order to find a good lower bound for $C_{n , \ell_n}$ which has a simple expression in terms of $n , q , r , u_0$.

\begin{lemma}\label{l7}
For all positive integer $n$, we have
$$
k_n ~\leq~ k_{n + 1} ~\leq~ k_n + 1 .
$$
In other words, we have
$$
k_{n + 1} \in \left\{k_n ~,~ k_n + 1\right\} .
$$
\end{lemma}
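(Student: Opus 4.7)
The plan is to exploit the characterization \eqref{eq1} of $k_n$ together with the growth estimate \eqref{eq3} on $f$. Both inequalities $k_n \leq k_{n+1}$ and $k_{n+1} \leq k_n + 1$ fall out almost immediately once these two tools are in place, so this lemma should be short.

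For the first inequality, I would observe that by definition of $k_n$ we have $f(k_n) \leq q^n \leq q^{n+1}$, and then applying the equivalence \eqref{eq1} with $n$ replaced by $n+1$ and $k$ replaced by $k_n$ gives $k_n \leq k_{n+1}$. No further work is needed here.

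For the second inequality, the strategy is to show $f(k_n + 2) > q^{n+1}$, since then \eqref{eq1} (again with $n$ replaced by $n+1$) yields $k_n + 2 > k_{n+1}$, i.e., $k_{n+1} \leq k_n + 1$. To prove $f(k_n + 2) > q^{n+1}$, I would first note that by the maximality of $k_n$ we have $f(k_n + 1) > q^n$. Then I would apply the inequality \eqref{eq3} with $x = k_n + 2$, which rewrites as $f(k_n + 2) \geq q \cdot f(k_n + 1)$, and combine the two to conclude $f(k_n + 2) > q \cdot q^n = q^{n+1}$.

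There is no real obstacle here: the only subtlety is to read \eqref{eq3} in the correct direction (as $f(x) \geq q \, f(x-1)$) and to use the strict inequality $f(k_n + 1) > q^n$ coming from the maximality in the definition of $k_n$. The final ``in other words'' assertion is just a restatement combining the two inequalities with the fact that $k_n, k_{n+1} \in \Z$.
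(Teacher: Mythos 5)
Your proof is correct and uses essentially the same tools as the paper: the characterization \eqref{eq1} of $k_n$ for the first inequality, and the growth estimate \eqref{eq3} for the second. The only cosmetic difference is in the second half, where the paper steps down from $k_{n+1}$ (writing $f(k_{n+1}-1) \leq \frac{1}{q} f(k_{n+1}) \leq q^n$, hence $k_n \geq k_{n+1}-1$) while you step up from $k_n + 1$ via the maximality of $k_n$; the two arguments are mirror images of one another.
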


\begin{proof}
Let $n$ be a fixed positive integer. By definition of the integer $k_n$, we have
$$
f(k_n) ~\leq~ q^n ~\leq~ q^{n + 1} ,
$$
which implies (by definition of the integer $k_{n + 1}$) that:
$$
k_{n + 1} ~\geq~ k_n .
$$
On the other hand, we have (according to \eqref{eq3} and to the definition of the integer $k_{n + 1}$):
$$
f(k_{n + 1} - 1) ~\leq~ \frac{1}{q} f(k_{n + 1}) ~\leq~ \frac{1}{q} \, q^{n + 1} = q^n ,
$$
which implies (by definition of the integer $k_n$) that:
$$
k_n ~\geq~ k_{n + 1} - 1 ;
$$
that is
$$
k_{n + 1} ~\leq~ k_n + 1 .
$$
This completes the proof of the lemma.
\end{proof}

\begin{lemma}\label{l8}
For all positive integer $n$, we have
$$
\ell_{n + 1} \in \left\{\ell_n ~,~ \ell_n + 1\right\} .
$$
In addition, in the case when $\ell_{n + 1} = \ell_n + 1$, we have $\ell_n = k_n$ and $\ell_{n + 1} = k_{n + 1} = k_n + 1$.
\end{lemma}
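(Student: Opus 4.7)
The plan is to derive Lemma \ref{l8} as a direct case analysis from Lemma \ref{l7}, using only the definition $\ell_n := \max(1, k_n)$. By Lemma \ref{l7}, we know $k_{n+1} \in \{k_n, k_n+1\}$, so I would split the argument into two cases according to which of these two values $k_{n+1}$ equals, and in the second case subdivide further according to the sign of $k_n$.

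In the first case, $k_{n+1} = k_n$ immediately gives $\ell_{n+1} = \max(1, k_{n+1}) = \max(1, k_n) = \ell_n$, so the conclusion $\ell_{n+1} \in \{\ell_n, \ell_n + 1\}$ holds and there is nothing more to check. In the second case, $k_{n+1} = k_n + 1$, I would separate subcases. If $k_n \geq 1$, then $\ell_n = k_n$ and $\ell_{n+1} = \max(1, k_n + 1) = k_n + 1 = \ell_n + 1$, so we land in the second alternative and the additional identities $\ell_n = k_n$ and $\ell_{n+1} = k_{n+1} = k_n + 1$ are built into this subcase by construction. If instead $k_n \leq 0$, then $k_{n+1} = k_n + 1 \leq 1$, so $\ell_n = 1$ and $\ell_{n+1} = \max(1, k_n+1) = 1 = \ell_n$, landing again in the first alternative.

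Combining the three subcases, $\ell_{n+1}$ equals either $\ell_n$ or $\ell_n + 1$, and the latter can only occur through the subcase $k_n \geq 1$ with $k_{n+1} = k_n + 1$, which precisely forces $\ell_n = k_n$ and $\ell_{n+1} = k_{n+1} = k_n + 1$. This yields the second, refined assertion of the lemma.

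There is really no obstacle here: the lemma is a bookkeeping consequence of Lemma \ref{l7} together with the definition $\ell_n = \max(1, k_n)$. The only mild care needed is not to overlook the boundary case $k_n = 0$ (where $k_{n+1}$ could be $1$ but $\ell_{n+1}$ still equals $\ell_n = 1$), which is what rules out the spurious possibility of $\ell_{n+1} = \ell_n + 1$ without $\ell_n = k_n$.
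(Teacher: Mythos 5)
Your proof is correct and is essentially the paper's argument: both deduce the lemma from Lemma \ref{l7} and the definition $\ell_n = \max(1, k_n)$, differing only in presentation (you run an exhaustive forward case analysis on $k_{n+1}$ and the sign of $k_n$, while the paper chains inequalities through $\max$ and then argues backward from the assumption $\ell_{n+1} = \ell_n + 1$). Your handling of the boundary case $k_n \leq 0$ is exactly the point that makes the refined assertion work.
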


\begin{proof}
Let $n$ be a fixed positive integer. By Lemma \ref{l7}, we have that:
$$
k_n ~\leq~ k_{n + 1} ~\leq~ k_n + 1 .
$$
Hence
$$
\max(1 , k_n) ~\leq~ \max(1 , k_{n + 1}) ~\leq~ \max(1 , k_n + 1) = \max(0 , k_n) + 1 \leq \max(1 , k_n) + 1 ;
$$
therefore
$$
\ell_n ~\leq~ \ell_{n + 1} ~\leq~ \ell_n + 1 .
$$
This confirms the first part of the lemma.

Now, let us show the second part of the lemma. So, suppose that $\ell_{n + 1} = \ell_n + 1$ and show that $\ell_n = k_n$ and $\ell_{n + 1} = k_{n + 1} = k_n + 1$. Since $\ell_n = \max(1 , k_n) \geq 1$ and $\ell_{n + 1} = \ell_n + 1$ then $\ell_{n + 1} \geq 2$. This implies that $\ell_{n + 1} \neq 1$; thus $\ell_{n + 1} = k_{n + 1}$ (since $\ell_{n + 1} = \max(1 , k_{n + 1}) \in \{1 , k_{n + 1}\}$). Using this and Lemma \ref{l7} above, we derive that: $\ell_n = \ell_{n + 1} - 1 = k_{n + 1} - 1 \leq (k_n + 1) - 1 = k_n$; that is $\ell_n \leq k_n$. But since $\ell_n = \max(1 , k_n) \geq k_n$, we conclude that $\ell_n = k_n$. This completes the proof of the second part of the lemma and achieves this proof. 
\end{proof}

\begin{lemma}\label{l9}
For all positive integer $n$, we have
$$
C_{n + 1 , \ell_{n + 1}} ~\geq~ (r + 1) q^{\ell_n - 1} C_{n , \ell_n} .
$$
\end{lemma}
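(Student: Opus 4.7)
My plan is to split on the dichotomy from Lemma \ref{l8}, writing $\ell := \ell_n$. In the first case $\ell_{n+1} = \ell$, while in the second $\ell_{n+1} = \ell + 1$ and moreover $\ell + 1 = k_{n+1}$ (again by Lemma \ref{l8}). In each case the ratio $C_{n+1,\ell_{n+1}}/C_{n,\ell}$ telescopes to a single factor, since all but one $u_i$ cancel in the numerator/denominator of the two $C$'s: one finds
$$\frac{C_{n+1,\ell}}{C_{n,\ell}} \;=\; \frac{u_{n+1}}{{[n+1-\ell]}_q} \qquad \text{and} \qquad \frac{C_{n+1,\ell+1}}{C_{n,\ell}} \;=\; \frac{u_{n+1}}{u_\ell},$$
respectively. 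So the whole lemma reduces to lower-bounding $u_{n+1}$ in each case by the claimed factor $(r+1)q^{\ell-1}$ times the denominator.

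The engine of both estimates is the single identity
$$u_{n+1} \;=\; u_\ell + r q^\ell {[n+1-\ell]}_q,$$
which follows immediately from the $q$-addition rule ${[a+b]}_q = {[a]}_q + q^a {[b]}_q$ applied with $a = \ell$ and $b = n+1-\ell$ (after multiplying by $r$ and adding $u_0$). In the first case, substituting this identity reduces the target inequality to $u_\ell \geq q^{\ell - 1}(r + 1 - rq){[n+1-\ell]}_q$, whose right-hand side is non-positive because the standing assumption $q \geq 2$ (together with $r \geq 1$) gives $rq \geq r+1$, so this case is automatic. In the second case, the characterization \eqref{eq2} at $k = \ell + 1 = k_{n+1}$ yields ${[n+1-\ell]}_q \geq u_\ell$, which combined with the identity gives $u_{n+1} \geq u_\ell(1 + r q^\ell)$; it then remains only to verify the scalar inequality $1 + r q^\ell \geq (r + 1) q^{\ell-1}$, equivalent to $1 + q^{\ell-1}(r(q-1) - 1) \geq 0$, which again holds since $r(q-1) \geq 1$ and $\ell \geq 1$.

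The only mildly subtle step is spotting the additive decomposition $u_{n+1} = u_\ell + r q^\ell {[n+1-\ell]}_q$; once it is in hand the proof becomes a short two-line case analysis, with the single recurring ingredient $r(q-1) \geq 1$ encoding the hypothesis $q \geq 2$ made explicit at the start of §\ref{sec2.2}. No further input from earlier lemmas is needed beyond the dichotomy of Lemma \ref{l8} and the characterization \eqref{eq2} of $k_{n+1}$.
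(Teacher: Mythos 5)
Your proof is correct and follows essentially the same route as the paper: the same dichotomy from Lemma \ref{l8}, the same telescoping of $C_{n+1,\ell_{n+1}}/C_{n,\ell_n}$ to $u_{n+1}/{[n+1-\ell]}_q$ or $u_{n+1}/u_\ell$, and the same use of \eqref{eq2} in the second case. The only difference is cosmetic: you verify the final inequality via the decomposition $u_{n+1} = u_\ell + r q^\ell {[n+1-\ell]}_q$, which packages more transparently the same algebra that the paper carries out by expanding everything over the denominator $q-1$.
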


\begin{proof}
Let $n$ be a fixed positive integer. By Lemma \ref{l8}, we have that $\ell_{n + 1} \in \{\ell_n , \ell_{n + 1}\}$. So, we have to distinguish two cases:\\[1mm]
\underline{1\textsuperscript{st} case:} (if $\ell_{n + 1} = \ell_n$) \\[1mm]
In this case, we have
\begin{align}
C_{n + 1 , \ell_{n + 1}} ~=~ C_{n + 1 , \ell_n} ~=~ \frac{u_{\ell_n} u_{\ell_n + 1} \cdots u_n u_{n + 1}}{{[n + 1 - \ell_n]}_q!} &~=~ \frac{u_{\ell_n} u_{\ell_n + 1} \cdots u_n}{{[n - \ell_n]}_q!} \cdot \frac{u_{n + 1}}{{[n + 1 - \ell_n]}_q} \notag \\
&~=~ C_{n , \ell_n} \cdot \frac{u_{n + 1}}{{[n + 1 - \ell_n]}_q} . \label{eq4}
\end{align}
Next, we have
\begin{eqnarray*}
u_{n + 1} - (r + 1) q^{\ell_n - 1} {[n + 1 - \ell_n]}_q & = & r {[n + 1]}_q + u_0 - (r + 1) q^{\ell_n - 1} \left(\frac{q^{n + 1 - \ell_n} - 1}{q - 1}\right) \\[1mm]
& = & r \left(\frac{q^{n + 1} - 1}{q - 1}\right) + u_0 - (r + 1) \left(\frac{q^n - q^{\ell_n - 1}}{q - 1}\right) \\[1mm]
& = & \frac{r (q^{n + 1} - 1) + u_0 (q - 1) - (r + 1) (q^n - q^{\ell_n - 1})}{q - 1} \\[1mm]
& = & \frac{r q^{n + 1} - (r + 1) q^n + (r + 1) q^{\ell_n - 1} - r + u_0 (q - 1)}{q - 1} \\[1mm]
& = & \frac{\big(r (q - 1) - 1\big) q^n + [(r + 1) q^{\ell_n - 1} - r] + u_0 (q - 1)}{q - 1} \\
& \geq & 0
\end{eqnarray*}
(since $q \geq 2$, $r \geq 1$, $u_0 \geq 0$ and $\ell_n \geq 1$). Thus
$$
\frac{u_{n + 1}}{{[n + 1 - \ell_n]}_q} ~\geq~ (r + 1) q^{\ell_n - 1} .
$$
By reporting this into \eqref{eq4}, we get
$$
C_{n + 1 , \ell_{n + 1}} ~\geq~ (r + 1) q^{\ell_n - 1} C_{n , \ell_n} ,
$$
as required. \\[1mm]
\underline{2\textsuperscript{nd} case:} (if $\ell_{n + 1} = \ell_n + 1$) \\[1mm]
In this case, we have (according to Lemma \ref{l8}): $\ell_n = k_n$ and $\ell_{n + 1} = k_{n + 1} = k_n + 1$. Thus, we have
\begin{equation}
C_{n + 1 , \ell_{n + 1}} ~=~ C_{n + 1 , k_n + 1} ~=~ \frac{u_{k_n + 1} u_{k_n + 2} \cdots u_n u_{n + 1}}{{[n - k_n]}_q!} ~=~ C_{n , k_n} \cdot \frac{u_{n + 1}}{u_{k_n}} ~=~ C_{n , \ell_n} \cdot \frac{u_{n + 1}}{u_{k_n}} . \label{eq5}
\end{equation}
Next, according to the inequality of the right-hand side of \eqref{eq2} (applied for $(n + 1)$ instead of $n$ and $k_{n + 1}$ instead of $k$), we have (since $k_{n + 1} \leq k_{n + 1}$):
$$
u_{k_n} ~=~ u_{k_{n + 1} - 1} ~\leq~ {\left[(n + 1) - k_{n + 1} + 1\right]}_q ~=~ {[n - k_n + 1]}_q .
$$
Hence:
\begin{eqnarray*}
u_{n + 1} - (r + 1) q^{\ell_n - 1} u_{k_n} & = & u_{n + 1} - (r + 1) q^{k_n - 1} u_{k_n} \\
& \geq & u_{n + 1} - (r + 1) q^{k_n - 1} {[n - k_n + 1]}_q \\
& = & r \left(\frac{q^{n + 1} - 1}{q - 1}\right) + u_0 - (r + 1) q^{k_n - 1} \left(\frac{q^{n - k_n + 1} - 1}{q - 1}\right) \\[1mm]
& = & \frac{r (q^{n + 1} - 1) + u_0 (q - 1) - (r + 1) (q^n - q^{k_n - 1})}{q - 1} \\
& = & \frac{(r (q - 1) - 1) q^n + u_0 (q - 1) + (r + 1) q^{k_n - 1} - r}{q - 1} \\
& \geq & 0
\end{eqnarray*}
(since $q \geq 2$, $r \geq 1$, $u_0 \geq 0$ and $k_n = \ell_n \geq 1$). Thus
$$
\frac{u_{n + 1}}{u_{k_n}} ~\geq~ (r + 1) q^{\ell_n - 1} .
$$
By reporting this into \eqref{eq5}, we get
$$
C_{n + 1 , \ell_{n + 1}} ~\geq~ (r + 1) q^{\ell_n - 1} C_{n , \ell_n} ,
$$
as required. The proof of the lemma is complete.
\end{proof}

By induction, we derive from Lemma \ref{l9} above the following:

\begin{coll}\label{c1}
For all positive integer $n$, we have
$$
C_{n , \ell_n} ~\geq~ u_1 (r + 1)^{n - 1} q^{\sum_{i = 1}^{n - 1} (\ell_i - 1)} .
$$
\end{coll}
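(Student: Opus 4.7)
The plan is to prove the corollary by straightforward induction on $n$, using Lemma \ref{l9} as the engine of the induction. No fresh ideas are needed; the statement is essentially a telescoping of the one-step inequality in Lemma \ref{l9}.

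For the base case $n = 1$, I would observe that $\ell_1 \in \{1, \dots, 1\}$ forces $\ell_1 = 1$, and then by the very definition of $C_{n,k}$, we have
$$
C_{1, \ell_1} ~=~ C_{1,1} ~=~ \frac{u_1}{{[0]}_q!} ~=~ u_1 ,
$$
which matches the right-hand side since the factor $(r+1)^{0}$ equals $1$ and the exponent sum $\sum_{i=1}^{0}(\ell_i - 1)$ is empty (so $q^0 = 1$).

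For the inductive step, assuming the inequality for some $n \geq 1$, I would apply Lemma \ref{l9} to obtain
$$
C_{n+1, \ell_{n+1}} ~\geq~ (r+1) q^{\ell_n - 1} C_{n, \ell_n} ~\geq~ (r+1) q^{\ell_n - 1} \cdot u_1 (r+1)^{n-1} q^{\sum_{i=1}^{n-1}(\ell_i - 1)} ,
$$
and then regroup the $(r+1)$-factors and absorb $q^{\ell_n - 1}$ into the exponent sum to reach $u_1 (r+1)^n q^{\sum_{i=1}^{n}(\ell_i - 1)}$, which is exactly the claim at index $n+1$.

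There is no real obstacle here — the only minor points to handle carefully are (i) checking that $\ell_1 = 1$ and that the empty product/sum conventions give the expected value $u_1$ in the base case, and (ii) keeping the indices in the exponent sum aligned when passing from $n$ to $n+1$. Both are bookkeeping. I would therefore keep the proof to a few lines.
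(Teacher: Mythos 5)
Your proposal is correct and is essentially the same argument as the paper's: the paper telescopes $C_{n,\ell_n} = C_{1,\ell_1}\prod_{i=1}^{n-1} C_{i+1,\ell_{i+1}}/C_{i,\ell_i}$ and bounds each factor by Lemma \ref{l9}, which is just the product form of your induction, and it likewise uses $\ell_1 = \max(1,k_1) = 1$ to get $C_{1,\ell_1} = u_1$ in the base case.
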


\begin{proof}
Let $n$ be a positive integer. From Lemma \ref{l9}, we have
\[
C_{n , \ell_n} ~=~ C_{1 , \ell_1} \prod_{i = 1}^{n - 1} \frac{C_{i + 1 , \ell_{i + 1}}}{C_{i , \ell_i}} ~\geq~ C_{1 , \ell_1} \prod_{i = 1}^{n - 1} \left\{(r + 1) q^{\ell_i - 1}\right\} ~=~ C_{1 , \ell_1} (r + 1)^{n - 1} q^{\sum_{i = 1}^{n - 1} (\ell_i - 1)} .
\]
Next, since $k_1 \leq 1$, we have $\ell_1 = \max(1 , k_1) = 1$; hence $C_{1 , \ell_1} = C_{1 , 1} = \frac{u_1}{{[0]}_q!} = u_1$. Consequently, we have
\[
C_{n , \ell_n} ~\geq~ u_1 (r + 1)^{n - 1} q^{\sum_{i = 1}^{n - 1} (\ell_i - 1)} ,
\]
as required. The corollary is proved.
\end{proof}

From Theorem \ref{t1} and Corollary \ref{c1} above, we immediately deduce the following:

\begin{coll}\label{c2}
For all positive integer $n$, we have
$$
\lcm\left\{u_1 , u_2 , \dots , u_n\right\} ~\geq~ u_1 (r + 1)^{n - 1} q^{\sum_{i = 1}^{n - 1} (\ell_i - 1)} .
$$
\end{coll}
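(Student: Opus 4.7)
The plan is to combine Theorem \ref{t1} with Corollary \ref{c1} via the trivial monotonicity of $\lcm$ under inclusion of index sets.

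First, I would fix $n \in \N^*$ and observe that for every integer $k$ with $1 \leq k \leq n$, the set $\{u_k, u_{k+1}, \dots, u_n\}$ is contained in $\{u_1, u_2, \dots, u_n\}$, so $\lcm\{u_k, \dots, u_n\}$ divides $\lcm\{u_1, \dots, u_n\}$, giving the inequality $\lcm\{u_1, \dots, u_n\} \geq \lcm\{u_k, \dots, u_n\}$. Next, Theorem \ref{t1} tells us that $\lcm\{u_k, \dots, u_n\}$ is a (positive integer) multiple of the rational $C_{n,k} = \frac{u_k u_{k+1} \cdots u_n}{[n-k]_q!}$, hence $\lcm\{u_k, \dots, u_n\} \geq C_{n,k}$. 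Chaining these two bounds yields $\lcm\{u_1, \dots, u_n\} \geq C_{n,k}$ for every $k \in \{1, \dots, n\}$.

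Taking the maximum over $k \in \{1, \dots, n\}$ and applying Lemma \ref{l6}, which identifies $\ell_n$ as the maximizer of $C_{n,k}$ on this range, I obtain
$$
\lcm\{u_1, u_2, \dots, u_n\} \;\geq\; \max_{1 \leq k \leq n} C_{n,k} \;=\; C_{n, \ell_n}.
$$
Finally, I would invoke Corollary \ref{c1}, which gives $C_{n, \ell_n} \geq u_1 (r+1)^{n-1} q^{\sum_{i=1}^{n-1}(\ell_i - 1)}$, and concatenate the two inequalities to conclude.

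There is no real obstacle here, since all the substantive work has already been carried out in Theorem \ref{t1}, Lemma \ref{l6}, and Corollary \ref{c1}; the only step is to note that the rational number $C_{n,k}$ is in fact a positive rational (which is clear since $u_k, \dots, u_n > 0$ and $[n-k]_q! > 0$) so that ``multiple of $C_{n,k}$'' implies ``$\geq C_{n,k}$'', and to remember that the $\lcm$ of a subfamily divides the $\lcm$ of the full family. The proof therefore reduces to a one-line chain of inequalities.
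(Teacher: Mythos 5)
Your proposal is correct and follows essentially the same route as the paper: the lcm of the subfamily $\{u_{\ell_n},\dots,u_n\}$ divides the full lcm, Theorem \ref{t1} bounds that sub-lcm below by $C_{n,\ell_n}$, and Corollary \ref{c1} finishes. The only cosmetic difference is that you pass through the maximum over all $k$ (via Lemma \ref{l6}) whereas the paper specializes directly to $k=\ell_n$, which is all that is needed.
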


\begin{proof}
Let $n$ be a fixed positive integer. Since the positive integer $\lcm\{u_1 , u_2 , \dots , u_n\}$ is obviously a multiple of the positive integer $\lcm\{u_{\ell_n} , u_{\ell_n + 1} , \dots , u_n\}$, which is a multiple of the rational number $\frac{u_{\ell_n} u_{\ell_n + 1} \cdots u_n}{{[n - \ell_n]}_q!} = C_{n , \ell_n}$ (according to Theorem \ref{t1}), then we have
$$
\lcm\left\{u_1 , u_2 , \dots , u_n\right\} ~\geq~ C_{n , \ell_n} .
$$
The result of the corollary then follows from Corollary \ref{c1}. The proof is achieved.
\end{proof}

\noindent\textbf{Remark.} If we allow to take $q = 1$ in Corollary \ref{c2}, then we exactly obtain the result of Hong and Feng \cite{hong1} (recalled in \eqref{eq7}).

Now, in order to derive from Corollary \ref{c2} above an explicit lower bound for \linebreak $\lcm\{u_1 , u_2 , \dots , u_n\}$ ($n \geq 1$), it remains to bound from below the $\ell_i$'s in terms of $n , q , r$ and $u_0$. We just give here two ways to bound from below the $\ell_i$'s, but there are certainly other ways (perhaps more intelligent) to do this. We have the following lemmas:
  
\begin{lemma}\label{l12}
Let
$$
A ~:=~ \max\left(0 ~,~ \frac{u_0 (q - 1) + 1 - r}{2 r}\right) .
$$
Then, for all positive integer $n$, we have
$$
\ell_n ~>~ \frac{1}{2} \left(n - \frac{\log{r} + 2 \log(A + 1)}{\log{q}}\right) .
$$
\end{lemma}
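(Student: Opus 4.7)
The plan is to exploit the defining inequality $f(k_n+1) > q^n$ coming from \eqref{eq1} (valid since $k_n$ is the largest integer with $f(k_n) \leq q^n$), combined with a simple algebraic upper bound for $f$, to produce a lower bound on $q^{k_n}$. Since $\ell_n = \max(1,k_n) \geq k_n$, such a bound will transfer directly to $\ell_n$.

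The core estimate I would establish first is the uniform upper bound
$$f(y) \;\leq\; r\left(q^{y-1}+A\right)^2 \qquad (\forall\, y \in \R).$$
Writing $f(y) = r q^{2(y-1)} + (u_0(q-1)+1-r)\,q^{y-1}$ and checking, in both cases of the maximum defining $A$, that $2rA \geq u_0(q-1)+1-r$, the bound follows from
$$rq^{2(y-1)} + 2rA\,q^{y-1} \;\leq\; r(q^{y-1}+A)^2.$$
Evaluating this at $y = k_n+1$ and combining with $q^n < f(k_n+1)$, one obtains $q^{k_n} > q^{n/2}/\sqrt{r} - A$.

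Denote now by $X$ the right-hand side of the claimed inequality, so that $q^X = q^{n/2}/(\sqrt{r}(A+1))$. A short rearrangement yields the identity
$$\frac{q^{n/2}}{\sqrt{r}} - A - \frac{q^{n/2}}{\sqrt{r}(A+1)} \;=\; \frac{A}{A+1}\left(\frac{q^{n/2}}{\sqrt{r}} - (A+1)\right),$$
whose right-hand side is nonnegative precisely when $X \geq 0$. Consequently, if $X \geq 0$ then $q^{k_n} > q^X$, i.e. $k_n > X$, hence $\ell_n \geq k_n > X$; and if $X < 0$, then $\ell_n \geq 1 > X$ trivially. The only mild obstacle is verifying the uniform upper bound for $f$, which requires the case split on the two alternatives defining $A$; everything after that is purely algebraic.
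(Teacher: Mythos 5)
Your proof is correct and rests on the same key estimate as the paper's, namely the completed-square bound $f(y) \le r\left(q^{y-1}+A\right)^2$ coming from $2rA \ge u_0(q-1)+1-r$; the paper further weakens this to $r(A+1)^2 q^{2(y-1)}$ for $y \ge 1$ and exhibits the explicit point $x_0 = X+1$ satisfying $f(x_0) \le q^n$ (hence $k_n \ge \lfloor x_0\rfloor > X$), whereas you invoke the complementary inequality $f(k_n+1) > q^n$ and absorb that same step into your closing algebraic identity. Both routes dispose of the degenerate case $X < 0$ via $\ell_n \ge 1$, so the two arguments are essentially identical up to reorganization.
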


\begin{proof}
Let $n$ be a fixed positive integer. Since the inequality of the lemma is obvious for $n \leq \frac{\log{r} + 2 \log(A + 1)}{\log{q}}$, we may assume for the sequel that $n > \frac{\log{r} + 2 \log(A + 1)}{\log{q}}$. Now, for any $x \geq 1$, we have
\begin{eqnarray*}
f(x) & := & q^{x - 1} \left(r q^{x - 1} + u_0 (q - 1) + 1 - r\right) \\[1mm]
& = & r \left\{\left(q^{x - 1} + \frac{u_0 (q - 1) + 1 - r}{2 r}\right)^2 - \left(\frac{u_0 (q - 1) + 1 - r}{2 r}\right)^2\right\} \\
& \leq & r \left(q^{x - 1} + \frac{u_0 (q - 1) + 1 - r}{2 r}\right)^2 \\
& \leq & r \left(q^{x - 1} + A\right)^2 \\
& \leq & r \left(q^{x - 1} + A q^{x - 1}\right)^2 \\
& = & r (A + 1)^2 q^{2 (x - 1)} .
\end{eqnarray*}
By applying this for
$$
x_0 ~:=~ \frac{1}{2} \left(n - \frac{\log{r} + 2 \log(A + 1)}{\log{q}}\right) + 1
$$
(which is $> 1$ according to our assumption $n > \frac{\log{r} + 2 \log(A + 1)}{\log{q}}$), we get
$$
f(x_0) ~\leq~ r (A + 1)^2 q^{n - \frac{\log{r} + 2 \log(A + 1)}{\log{q}}} ~=~ q^n .
$$
Then, since $f$ is increasing and $\lfloor x_0\rfloor \leq x_0$, we derive that:
$$
f(\lfloor x_0\rfloor) ~\leq~ f(x_0) ~\leq~ q^n ,
$$
which implies (according to the definition of $k_n$) that:
$$
k_n ~\geq~ \lfloor x_0\rfloor ~>~ x_0 - 1 .
$$
Hence:
$$
\ell_n ~:=~ \max(1 , k_n) ~\geq~ k_n ~>~ x_0 - 1 ,
$$
that is
$$
\ell_n ~>~ \frac{1}{2} \left(n - \frac{\log{r} + 2 \log(A + 1)}{\log{q}}\right) ,
$$
as required. The lemma is proved.
\end{proof}

\begin{lemma}\label{l13}
Let
$$
B ~:=~ \max\left(r ~,~ \frac{u_0 (q - 1) + 1 - r}{2}\right) .
$$
Then, for all positive integer $n$, we have
$$
\ell_n ~>~ \frac{1}{2} \left(n - \frac{\log(4 B)}{\log{q}}\right) .
$$
\end{lemma}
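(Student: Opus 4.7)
The strategy mirrors the proof of Lemma \ref{l12}: produce a clean upper bound for $f(x)$ of the form $C \cdot q^{2(x-1)}$ (so that $f(x) \leq q^n$ becomes an explicit inequality on $x$), evaluate it at a well-chosen $x_0$, and then convert the resulting bound on $k_n$ into one on $\ell_n = \max(1,k_n)$.

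First I would dispose of the trivial case $n \leq \frac{\log(4B)}{\log q}$, where the right-hand side of the claimed inequality is $\leq 0$ while $\ell_n \geq 1$ by definition. For the remaining case, I would seek the key inequality
\[
f(x) ~\leq~ 4B \cdot q^{2(x-1)} \qquad (\forall x \geq 1).
\]
Writing $f(x) = r\,q^{2(x-1)} + (u_0(q-1)+1-r)\,q^{x-1}$, I would split on the sign of $u_0(q-1)+1-r$. If it is $\leq 0$, the second summand is non-positive and one just uses $r \leq B \leq 4B$. Otherwise, using $q^{x-1} \geq 1$ for $x \geq 1$, the ratio $f(x)/q^{2(x-1)}$ is bounded by $r + (u_0(q-1)+1-r) \leq B + 2B = 3B \leq 4B$, by definition of $B$.

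Next, following the template of Lemma \ref{l12}, I would set
\[
x_0 ~:=~ \frac{1}{2}\left(n - \frac{\log(4B)}{\log q}\right) + 1,
\]
which satisfies $x_0 > 1$ under the nontrivial assumption on $n$. Substituting into the displayed bound yields $f(x_0) \leq 4B \cdot q^{n-\log(4B)/\log q} = q^n$. Since $f$ is increasing and $\lfloor x_0\rfloor \leq x_0$, I get $f(\lfloor x_0\rfloor) \leq q^n$, so by definition of $k_n$ we have $k_n \geq \lfloor x_0\rfloor > x_0 - 1$. Therefore $\ell_n \geq k_n > x_0 - 1 = \frac{1}{2}\bigl(n - \frac{\log(4B)}{\log q}\bigr)$, as required.

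The only mildly delicate point is the case split needed to justify $f(x) \leq 4B q^{2(x-1)}$ uniformly in $x \geq 1$; once that is in hand, everything else is a direct adaptation of the argument already given for Lemma \ref{l12}, with $r(A+1)^2$ replaced by $4B$.
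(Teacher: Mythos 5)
Your proposal is correct and follows essentially the same route as the paper: both reduce the lemma to the key inequality $f(x) \leq 4B\,q^{2(x-1)}$ for $x \geq 1$ and then evaluate at the same point $x_0$. The only (immaterial) difference is how that inequality is verified — you split on the sign of $u_0(q-1)+1-r$, whereas the paper avoids the case split by bounding $f(x) \leq q^{x-1}\left(Bq^{x-1}+2B\right) < B\left(q^{x-1}+1\right)^2 \leq 4B\,q^{2(x-1)}$ directly, using $r \leq B$ and $u_0(q-1)+1-r \leq 2B$ uniformly.
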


\begin{proof}
Let $n$ be a fixed positive integer. Since the inequality of the lemma is obvious for $n \leq \frac{\log(4 B)}{\log{q}}$, we may assume for the sequel that $n > \frac{\log(4 B)}{\log{q}}$. Now, for any $x \geq 1$, we have
\begin{eqnarray*}
f(x) & := & q^{x - 1} \left(r q^{x - 1} + u_0 (q - 1) + 1 - r\right) \\
& \leq & q^{x - 1} \left(B q^{x - 1} + 2 B\right) \\
& < & B \left(q^{x - 1} + 1\right)^2 \\
& \leq & B \left(2 q^{x - 1}\right)^2 \\
& = & 4 B q^{2 (x - 1)} .
\end{eqnarray*}
By applying this for
$$
x_1 ~:=~ \frac{1}{2} \left(n - \frac{\log(4 B)}{\log{q}}\right) + 1
$$
(which is $> 1$ according to our assumption $n > \frac{\log(4 B)}{\log{q}}$), we get
$$
f(x_1) ~\leq~ 4 B q^{n - \frac{\log(4 B)}{\log{q}}} ~=~ q^n .
$$
Then, since $f$ is increasing and $\lfloor x_1\rfloor \leq x_1$, we derive that:
$$
f(\lfloor x_1\rfloor) ~\leq~ f(x_1) ~\leq~ q^n ,
$$
which implies (according to the definition of $k_n$) that:
$$
k_n ~\geq~ \lfloor x_1\rfloor ~>~ x_1 - 1 ~=~ \frac{1}{2} \left(n - \frac{\log(4 B)}{\log{q}}\right) .
$$
Hence
$$
\ell_n ~:=~ \max(1 , k_n) ~\geq~ k_n ~>~ \frac{1}{2} \left(n - \frac{\log(4 B)}{\log{q}}\right) , 
$$
as required. The lemma is proved.
\end{proof}

We are now ready to prove Theorems \ref{t2} and \ref{t3} announced in §\ref{sec1}.

\begin{proof}[Proof of Theorem \ref{t2}]
By using successively Corollary \ref{c2} and Lemma \ref{l12}, we have for all $n \in \N^*$:
\begin{eqnarray*}
\lcm\left\{u_1 , u_2 , \dots , u_n\right\} & \geq & u_1 (r + 1)^{n - 1} q^{\sum_{i = 1}^{n - 1} (\ell_i - 1)} \\[1mm]
 & \geq & u_1 (r + 1)^{n - 1} q^{\frac{(n - 1) (n - 4)}{4} - \frac{1}{2} \frac{\log{r} + 2 \log(A + 1)}{\log{q}} (n - 1)} \\[1mm]
 & = & u_1 \left(\frac{r + 1}{\sqrt{r} (A + 1)}\right)^{n - 1} q^{\frac{(n - 1) (n - 4)}{4}} ,
\end{eqnarray*} 
as required.
\end{proof}

\begin{proof}[Proof of Theorem \ref{t3}]
By using successively Corollary \ref{c2} and Lemma \ref{l13}, we have for all $n \in \N^*$:
\begin{eqnarray*}
\lcm\left\{u_1 , u_2 , \dots , u_n\right\} & \geq & u_1 (r + 1)^{n - 1} q^{\sum_{i = 1}^{n - 1} (\ell_i - 1)} \\[1mm]
 & \geq & u_1 (r + 1)^{n - 1} q^{\frac{(n - 1) (n - 4)}{4} - \frac{1}{2} \frac{\log(4 B)}{\log{q}} (n - 1)} \\[1mm]
 & = & u_1 \left(\frac{r + 1}{2 \sqrt{B}}\right)^{n - 1} q^{\frac{(n - 1) (n - 4)}{4}} ,
\end{eqnarray*} 
as required.
\end{proof}

\begin{proof}[Proof of Corollary \ref{c3}]
It suffices to remark that $v_n = a (q - 1) {[n]}_q + a + b$ ($\forall n \in \N$) and then to apply Theorem \ref{t2} for the sequence ${(v_n)}_{n \in \N}$. We just specify that the imposed conditions $\gcd(a q , b) = \gcd(a + b , q - 1) = 1$ guarantee the conditions $\gcd(v_0 , r) = \gcd(v_1 , q) = 1$ required in Theorem \ref{t2} (with $r := a (q - 1)$). 
\end{proof}

\begin{proof}[Proof of Corollary \ref{c4}]
We simply apply Theorem \ref{t3} for the sequence ${(v_n)}_{n \in \N}$, after noticing that its general term can be written as: $v_n = a (q - 1) {[n]}_q + a + b$.
\end{proof}

\section{Numerical examples and remarks}

By applying our main results, we get for example the following nontrivial effective estimates:
\begin{itemize}
\item $\lcm\{2^1 - 1 , 2^2 - 1 , \dots , 2^n - 1\} \geq 2^{\frac{n (n - 1)}{4}}$ \hfill ($\forall n \geq 1$) \\
(Apply Theorem \ref{t2} for $u_n = {[n]}_2 = 2^n - 1$).
\item $\lcm\{2^1 + 1 , 2^2 + 1 , \dots , 2^n + 1\} \geq 3 \cdot 2^{\frac{(n - 1) (n - 4)}{4}}$ \hfill ($\forall n \geq 1$) \\
(Apply one of the two corollaries \ref{c3} or \ref{c4} for $v_n = 2^n + 1$).
\item $\lcm\{3^1 + 1 , 3^2 + 1 , \dots , 3^n + 1\} \geq 4 \cdot 3^{\frac{(n - 1) (n - 4)}{4}}$ \hfill ($\forall n \geq 1$) \\
(Remark that $\lcm\{3^1 + 1 , 3^2 + 1 , \dots , 3^n + 1\} = 2 \, \lcm\{\frac{3^1 + 1}{2} , \frac{3^2 + 1}{2} , \dots , \frac{3^n + 1}{2}\}$ and apply one of the two theorems \ref{t2} or \ref{t3} for $u_n = {[n]}_3 + 1 = \frac{3^n + 1}{2}$).
\end{itemize}

\noindent\textbf{Remarks.} 
\begin{enumerate}
\item Theorems \ref{t2} and \ref{t3} are incomparable in the sense that there are situations where Theorem \ref{t2} is stronger than Theorem \ref{t3} and other situations where we have the converse. For example, it is easy to verify that if $u_0 (q - 1) + 1 - r \leq 0$ then Theorem \ref{t2} is stronger than Theorem \ref{t3}, while if $u_0 (q - 1) + 1 - 3 r > 0$ then Theorem \ref{t3} is stronger than Theorem \ref{t2}.
\item By refining the arguments of bounding from below the $\ell_i$'s (that is the arguments of the proofs of Lemmas \ref{l12} and \ref{l13}), it is perhaps possible to obtain a lower bound for $\lcm\{u_1 , u_2 , \dots , u_n\}$ ($n \geq 1$) of the form:
$$
\lcm\{u_1 , u_2 , \dots , u_n\} ~\geq~ c \left(\frac{r + 1}{\sqrt{r}}\right)^{n - 1} q^{\frac{(n - 1) (n - 4)}{4}} ,
$$
where $c$ is a positive constant depending only on $q$, $r$ and $u_0$. It appears that this is the best that can be expected from this method!
\item It is remarkable that our lower bounds of $\lcm\{u_1 , u_2 , \dots , u_n\}$, for the considered sequences ${(u_n)}_n$, are quite close to $\sqrt{u_1 u_2 \cdots u_n}$. More precisely, we can easily deduce from our main results that in the same context, we have $\lcm\{u_1 , u_2 , \dots , u_n\} \geq c_3 c_4^n \sqrt{u_1 u_2 \cdots u_n}$, for some suitable positive constants $c_3$ and $c_4$, depending only on $q$, $r$ and $u_0$.
\item There is something in common between our results and the recent result by Bousla and Farhi \cite{bousfar} providing effective bounds for $\lcm(U_1 , U_2 , \dots , U_n)$, when ${(U_n)}_{n \in \N}$ is a particular Lucas sequence; precisely, when ${(U_n)}_n$ is recursively defined by: $U_0 = 0$, $U_1 = 1$ and $U_{n + 2} = P U_{n + 1} - Q U_n$ ($\forall n \in \N$) for some $P , Q \in \Z^*$, with $P^2 - 4 Q > 0$ and $\gcd(P , Q) = 1$. Indeed, if we take $P = q + 1$ and $Q = q$ (for some integer $q \geq 2$), we obtain that $U_n = {[n]}_q$ and the Bousla-Farhi lower bound then gives:
$$
\lcm\left({[1]}_q , {[2]}_q , \dots , {[n]}_q\right) ~\geq~ q^{\frac{n^2}{4} - \frac{n}{2} - 1} ~~~~ (\forall n \geq 1) ,
$$
which is almost the same as what obtained in this paper.
\end{enumerate}

\end{document}